\numberwithin{equation}{section}
\definecolor{webgreen}{rgb}{0,.5,0}
\definecolor{webbrown}{rgb}{.6,0,0}
\newtheorem{thm}{Theorem}
\newtheorem{theorem}[thm]{Theorem}
\newtheorem{lemma}{Lemma}
\newtheorem{corollary}[thm]{Corollary}
\title{Summation Identities Involving Padovan and Perrin Numbers}
\author[]{Kunle Adegoke \\\href{mailto:adegoke00@gmail.com}{\tt adegoke00@gmail.com}}
\affil{Department of Physics and Engineering Physics, \mbox{Obafemi Awolowo University}, 220005 Ile-Ife, Nigeria}
\begin{document}
\date{}

\maketitle

\begin{abstract}
\noindent Unlike in the case of Fibonacci and Lucas numbers, there is a paucity of literature dealing with summation identities involving the Padovan and Perrin numbers. In this paper, we derive various summation identities for these numbers, including binomial and double binomial identities. Our results derive from the rich algebraic properties exhibited by the zeros of the characteristic polynomial of the Padovan/Perrin sequence.

\end{abstract}
\section{Introduction}
The Padovan numbers, $P_n$, are defined by
\begin{equation}\label{eq.tl83ils}
P_n=P_{n-2}+P_{n-3}\;(n\ge 3) \,,\quad P_0=P_1=P_2=1\,,
\end{equation}
and the Perrin numbers, $Q_n$, by
\begin{equation}\label{eq.tl83ils}
Q_n=Q_{n-2}+Q_{n-3}\;(n\ge 3) \,,\quad Q_0=3,\,Q_1=0,\,Q_2=2\,.
\end{equation}

\bigskip

Both sequences $(P_n)$ and $(Q_n)$ can be extended to negative indices by writing the recurrence relations as $P_{n}=P_{n+3}-P_{n+1}$ and $Q_{n}=Q_{n+3}-Q_{n+1}$ and replacing $n$ with $-n$, thus obtaining
\[
P_{ - n}  = P_{ - n + 3}  - P_{ - n + 1},\quad Q_{ - n}  = Q_{ - n + 3}  - Q_{ - n + 1}\,.
\]
We have (see Theorem \ref{thm.c2lx8du})
\[
P_{-n}=P_{n-7}^2-P_{n-6}P_{n-8}
\]
and
\[
2Q_{-n}=Q_n^2-Q_{2n}\,.
\]
Following is a list of the first few Padovan and Perrin numbers:\\
\begin{tabular}{lllllllllllllllllllll}
\multicolumn{1}{c}{$n$} & \multicolumn{1}{c}{$ -7$} & \multicolumn{1}{c}{$ -6$} & \multicolumn{1}{c}{$ -5$} & \multicolumn{1}{c}{$ -4$} & \multicolumn{1}{c}{$ -3$} & \multicolumn{1}{c}{$ -2$} & \multicolumn{1}{c}{$ -1$} & \multicolumn{1}{c}{$ 0$} & \multicolumn{1}{c}{$ 1$} & \multicolumn{1}{c}{$ 2$} & \multicolumn{1}{c}{$ 3$} & \multicolumn{1}{c}{$ 4$} & \multicolumn{1}{c}{$ 5$} & \multicolumn{1}{c}{$ 6$} & \multicolumn{1}{c}{$ 7$} & \multicolumn{1}{c}{$ 8$} & \multicolumn{1}{c}{$ 9$} & \multicolumn{1}{c}{$ 10$} & \multicolumn{1}{c}{$ 11$} & \multicolumn{1}{c}{$ 12$} \\ 
\hline
\multicolumn{1}{c}{$P_n$} & \multicolumn{1}{c}{$ 1$} & \multicolumn{1}{c}{$ -1$} & \multicolumn{1}{c}{$ 1$} & \multicolumn{1}{c}{$ 0$} & \multicolumn{1}{c}{$ 0$} & \multicolumn{1}{c}{$ 1$} & \multicolumn{1}{c}{$ 0$} & \multicolumn{1}{c}{$ 1$} & \multicolumn{1}{c}{$ 1$} & \multicolumn{1}{c}{$ 1$} & \multicolumn{1}{c}{$ 2$} & \multicolumn{1}{c}{$ 2$} & \multicolumn{1}{c}{$ 3$} & \multicolumn{1}{c}{$ 4$} & \multicolumn{1}{c}{$ 5$} & \multicolumn{1}{c}{$ 7$} & \multicolumn{1}{c}{$ 9$} & \multicolumn{1}{c}{$ 12$} & \multicolumn{1}{c}{$ 16$} & \multicolumn{1}{c}{$ 21$} \\ 
\multicolumn{1}{c}{$Q_n$} & \multicolumn{1}{c}{$ -1$} & \multicolumn{1}{c}{$ -2$} & \multicolumn{1}{c}{$ 4$} & \multicolumn{1}{c}{$ -3$} & \multicolumn{1}{c}{$ 2$} & \multicolumn{1}{c}{$ 1$} & \multicolumn{1}{c}{$ -1$} & \multicolumn{1}{c}{$ 3$} & \multicolumn{1}{c}{$ 0$} & \multicolumn{1}{c}{$ 2$} & \multicolumn{1}{c}{$ 3$} & \multicolumn{1}{c}{$ 2$} & \multicolumn{1}{c}{$ 5$} & \multicolumn{1}{c}{$ 5$} & \multicolumn{1}{c}{$ 7$} & \multicolumn{1}{c}{$ 10$} & \multicolumn{1}{c}{$ 12$} & \multicolumn{1}{c}{$ 17$} & \multicolumn{1}{c}{$ 22$} & \multicolumn{1}{c}{$ 29$} \\ 
\end{tabular}

\medskip

Compared to the related Fibonacci and Lucas sequences, there is a dearth of literature on Padovan and Perrin sequences. We mention Shannon et.~al.~\cite{shannon97} and Yilmaz and Taskara~\cite{yilmaz}. Useful information is contained in the Wikipedia articles~\cite{wikipad,wikiper}, Mathworld articles~\cite{mathworld_padovan,mathworld_perrin} and the Mathpages~\cite{mathpages_perrin} article on these numbers. 
\medskip

The purpose of this paper is to present binomial summation identities such as
\[
\sum_{j = 0}^{\left\lfloor {n/2} \right\rfloor } {( - 1)^j \frac{n}{{n - j}}\binom {n-j}jP_{p + n - 3j} }  = ( - 1)^n (Q_nP_p-P_{n+p} )\,,
\]
and
\[
\sum_{j = 0}^{\left\lfloor {n/2} \right\rfloor } {( - 1)^j \binom {n-j}jP_{p + n - 3j} }  = ( - 1)^{n - 1}(P_{p+1}P_{n-3}-P_pP_{n-2}) \,.
\]
We will also derive double binomial summation identities such as
\[
\sum_{j = 0}^{\left\lfloor {n/2} \right\rfloor } {\sum_{k = 0}^{n - 2j} {( - 1)^{j + k} \frac{n}{{n - j}}\binom {n-j}j\binom {n-2j}kQ_p^{n - 2j - k} P_{q - pj + pk} } }  = Q_{pn} P_q  - P_{pn + q}\,.
\]
Finally, we will derive ordinary summation identities such as the sum of Padovan numbers and Perrin numbers with subscripts in arithmetic progression:
\[
\sum_{j = 0}^n {P_{pj + q} }  = \frac{{\left| {\begin{array}{*{20}c}
   {P_{pn + p + q}  - P_q } & {P_{p - 3} } & {P_{p - 4} }  \\
   {P_{pn + p + q + 1}  - P_{q + 1} } & {P_{p - 2}  - 1} & {P_{p - 3} }  \\
   {P_{pn + p + q - 1}  - P_{q - 1} } & {P_{p - 4} } & {P_{p - 5}  - 1}  \\
\end{array}} \right|}}{{\left| {\begin{array}{*{20}c}
   {P_{p - 2}  - 1} & {P_{p - 3} } & {P_{p - 4} }  \\
   {P_{p - 1} } & {P_{p - 2}  - 1} & {P_{p - 3} }  \\
   {P_{p - 3} } & {P_{p - 4} } & {P_{p - 5}  - 1}  \\
\end{array}} \right|}}\,,
\]
\[
\sum_{j = 0}^n {Q_{pj + q} }  = \frac{{\left| {\begin{array}{*{20}c}
   {Q_{pn + p + q}  - Q_q } & {P_{p - 3} } & {P_{p - 4} }  \\
   {Q_{pn + p + q + 1}  - Q_{q + 1} } & {P_{p - 2}  - 1} & {P_{p - 3} }  \\
   {Q_{pn + p + q - 1}  - Q_{q - 1} } & {P_{p - 4} } & {P_{p - 5}  - 1}  \\
\end{array}} \right|}}{{\left| {\begin{array}{*{20}c}
   {P_{p - 2}  - 1} & {P_{p - 3} } & {P_{p - 4} }  \\
   {P_{p - 1} } & {P_{p - 2}  - 1} & {P_{p - 3} }  \\
   {P_{p - 3} } & {P_{p - 4} } & {P_{p - 5}  - 1}  \\
\end{array}} \right|}}\,,
\]
and the generating function of Padovan numbers with subscripts in arithmetic progression:
\[
\sum_{j = 0}^\infty  {P_{pj + q} y^j }  = \frac{{\left| {\begin{array}{*{20}c}
   {P_q } & { - yP_{p - 3} } & { - yP_{p - 4} }  \\
   {P_{q + 1} } & { - yP_{p - 2}  + 1} & { - yP_{p - 3} }  \\
   {P_{q - 1} } & { - yP_{p - 4} } & { - yP_{p - 5}  + 1}  \\
\end{array}} \right|}}{{\left| {\begin{array}{*{20}c}
   { - yP_{p - 2}  + 1} & { - yP_{p - 3} } & { - yP_{p - 4} }  \\
   { - yP_{p - 1} } & { - yP_{p - 2}  + 1} & { - yP_{p - 3} }  \\
   { - yP_{p - 3} } & { - yP_{p - 4} } & { - yP_{p - 5}  + 1}  \\
\end{array}} \right|}}\,.
\]
Here and throughout this paper, $|\cdot|$ denotes matrix determinant.

\medskip

Throughout this paper, we denote by $\alpha$, $\beta$ and $\gamma$, the zeros of the characteristic polynomial, \mbox{$x^3-x-1$}, of the Padovan sequence. 
\subsection{Algebraic properties of $\alpha$, $\beta$ and $\gamma$}
Vieta's formulas give
\begin{equation}\label{eq.nj4qi47}
\alpha+\beta=-\gamma,\quad\alpha\beta=1/\gamma,\quad\alpha\beta+\alpha\gamma+\beta\gamma=-1\,,
\end{equation}
from which we also infer
\begin{equation}\label{eq.mo0lb2w}
\alpha^2+\beta^2=\gamma^2-2/\gamma=-\gamma^2+2\,,
\end{equation}
\begin{equation}\label{eq.uafik1h}
(\alpha-\beta)^2=1-3/\gamma=4-3\gamma^2\,,
\end{equation}
\begin{equation}
\alpha^2\beta+\beta^2\alpha=-1=\alpha\beta(\alpha+\beta)\,,
\end{equation}
\begin{equation}\label{eq.i7l9sx8}
(\alpha^2-\beta^2)^2=\gamma^2-3\gamma
\end{equation}
and
\begin{equation}\label{eq.wiif5om}
\alpha/\beta+\beta/\alpha=\gamma-1\,.
\end{equation}
The following result is readily established by mathematical induction.
\begin{lemma}\label{lem.mjwk6zf}
The following identities hold for integer $n$:
\makeatletter
\renewcommand\tagform@[1]{\maketag@@@{\ignorespaces#1\unskip\@@italiccorr}}
\begin{equation}\tag{P1}
\alpha^n=\alpha^2P_{n-4}+\alpha P_{n-3}+P_{n-5}\,,
\end{equation}
\[\tag{P1a}
\beta^n=\beta^2P_{n-4}+\beta P_{n-3}+P_{n-5}
\]
and
\[\tag{P1c}
\gamma^n=\gamma^2P_{n-4}+\gamma P_{n-3}+P_{n-5}\,.
\]
\makeatother
\end{lemma}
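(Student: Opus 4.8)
The plan is to prove the identity (P1) for $\alpha$ by induction on $n$, and then to observe that (P1a) and (P1c) follow by exactly the same argument, since $\beta$ and $\gamma$ are also zeros of $x^3-x-1$ and hence satisfy the same relation $x^3=x+1$ that drives the induction. Because the recurrence defining $P_n$ has order three (and has already been extended to all of $\Z$ in the Introduction), the induction needs three consecutive base cases and then runs in both directions.

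For the base cases I would take $n=3,4,5$ and verify them directly from $\alpha^3=\alpha+1$ together with the tabulated values $P_{-2}=1$, $P_{-1}=0$, $P_0=P_1=P_2=1$: indeed $\alpha^3=\alpha+1=\alpha^2P_{-1}+\alpha P_0+P_{-2}$, then $\alpha^4=\alpha^2+\alpha=\alpha^2P_0+\alpha P_1+P_{-1}$, and $\alpha^5=\alpha^3+\alpha^2=\alpha^2+\alpha+1=\alpha^2P_1+\alpha P_2+P_0$.

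For the inductive step (upward), suppose (P1) holds with $n$ replaced by $n-2$ and by $n-3$. Multiplying $\alpha^3=\alpha+1$ by $\alpha^{n-3}$ gives $\alpha^n=\alpha^{n-2}+\alpha^{n-3}$, whence
\[
\alpha^n=\bigl(\alpha^2P_{n-6}+\alpha P_{n-5}+P_{n-7}\bigr)+\bigl(\alpha^2P_{n-7}+\alpha P_{n-6}+P_{n-8}\bigr),
\]
and collecting the coefficients of $\alpha^2$, $\alpha$ and $1$ and applying the Padovan recurrence $P_m=P_{m-2}+P_{m-3}$ with $m=n-4,\,n-3,\,n-5$ yields $\alpha^n=\alpha^2P_{n-4}+\alpha P_{n-3}+P_{n-5}$, as required. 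The downward step is identical: writing $\alpha^3=\alpha+1$ in the form $\alpha^{n}=\alpha^{n+3}-\alpha^{n+1}$ and using the backward form of the Padovan recurrence propagates the identity to all negative integers. Replacing $\alpha$ by $\beta$ or by $\gamma$ throughout gives (P1a) and (P1c).

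There is no genuine obstacle here; the only points needing a little care are the bookkeeping of the index shifts so that the three applications of $P_m=P_{m-2}+P_{m-3}$ line up correctly, and checking a full block of three consecutive indices so that the two-directional induction is properly anchored.
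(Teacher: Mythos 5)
Your proof is correct and takes the same route the paper intends: the paper simply asserts that Lemma \ref{lem.mjwk6zf} is ``readily established by mathematical induction,'' and your two-directional induction on three consecutive base cases, driven by $\alpha^3=\alpha+1$ and the Padovan recurrence, is exactly the argument being alluded to, with all index bookkeeping checked correctly.
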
 
Standard techniques for solving difference equations give
\begin{equation}\label{eq.m0eml7d}
Q_n=\alpha^n+\beta^n+\gamma^n\,,
\end{equation}
which we shall often employ in the useful form
\begin{equation}\label{eq.gtb313p}
\alpha^n+\beta^n=Q_n-\gamma^n\,.
\end{equation}
Adding the identities in Lemma \ref{lem.mjwk6zf}, we have
\[
Q_n=\alpha^n+\beta^n+\gamma^n=Q_2P_{n-4}+Q_1P_{n-3}+3P_{n-5}\,,
\]
which allows us to express the Perrin numbers in terms of the Padovan numbers:
\begin{equation}\label{eq.zdw7lv5}
Q_n=2P_{n-4}+3P_{n-5}\,.
\end{equation} 
From identity \eqref{eq.mo0lb2w} and Lemma \ref{lem.mjwk6zf}, we also have
\begin{equation}\label{eq.vg2t1wo}
\alpha^n+\beta^n=-\gamma^2P_{n-4}-\gamma P_{n-3}+2P_{n-2}\,.
\end{equation}
Lemma \ref{lem.mjwk6zf} gives
\begin{equation}\label{eq.vf37uhb}
\alpha^n-\beta^n=(\alpha^2-\beta^2)P_{n-4}+(\alpha-\beta)P_{n-3}\,;
\end{equation}
so that
\begin{equation}\label{eq.rn0hi2p}
\frac{\alpha^n-\beta^n}{\alpha-\beta}=-\gamma P_{n-4}+P_{n-3}\,.
\end{equation}
Squaring \eqref{eq.vf37uhb} and making use of \eqref{eq.uafik1h} and \eqref{eq.i7l9sx8} gives
\begin{equation}
(\alpha ^n  - \beta ^n )^2  = (P_{n - 4}^2  - 3P_{n - 3}^2 ) \gamma^2  - (3P_{n - 4}^2  + 2P_{n - 4} P_{n - 3} )\gamma  + 4P_{n - 3}^2  + 6P_{n - 3} P_{n - 4}\,. 
\end{equation}
Since $P_{n-4}=P_{n-2}-P_{n-5}$, identity \eqref{eq.zdw7lv5} can also be written as
\begin{equation}\label{eq.d16qg78}
Q_n=2P_{n-2}+P_{n-5}\,.
\end{equation}
From \eqref{eq.d16qg78} and Lemma \ref{lem.mjwk6zf} we have
\begin{lemma}\label{lem.pqsiybc}
The following identities hold for integer $n$:
\makeatletter
\renewcommand\tagform@[1]{\maketag@@@{\ignorespaces#1\unskip\@@italiccorr}}
\begin{equation}\tag{P1d}
2\alpha^{n+2}+\alpha^{n-1}=\alpha^2Q_{n}+\alpha Q_{n+1}+Q_{n-1}\,,
\end{equation}
\makeatother
\[
2\beta^{n+2}+\beta^{n-1}=\beta^2Q_{n}+\beta Q_{n+1}+Q_{n-1}
\]
and
\[
2\gamma^{n+2}+\gamma^{n-1}=\gamma^2Q_{n}+\gamma Q_{n+1}+Q_{n-1}\,.
\]

\end{lemma}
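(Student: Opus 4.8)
The plan is to start from the right-hand side of (P1d) and reduce it to the left-hand side using only \eqref{eq.d16qg78} and Lemma \ref{lem.mjwk6zf}; the arguments for $\beta$ and $\gamma$ will be word-for-word identical, since identities (P1a) and (P1c) have exactly the same shape as (P1).

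First I would rewrite each Perrin number appearing on the right-hand side in terms of Padovan numbers via \eqref{eq.d16qg78}, namely $Q_n=2P_{n-2}+P_{n-5}$, $Q_{n+1}=2P_{n-1}+P_{n-4}$ and $Q_{n-1}=2P_{n-3}+P_{n-6}$. Substituting these into $\alpha^2Q_n+\alpha Q_{n+1}+Q_{n-1}$ and regrouping the terms according to whether they carry the coefficient $2$ or the coefficient $1$, the right-hand side of (P1d) becomes
\[
2\bigl(\alpha^2P_{n-2}+\alpha P_{n-1}+P_{n-3}\bigr)+\bigl(\alpha^2P_{n-5}+\alpha P_{n-4}+P_{n-6}\bigr)\,.
\]

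Next I would recognize each bracket as an instance of identity (P1), $\alpha^m=\alpha^2P_{m-4}+\alpha P_{m-3}+P_{m-5}$: the first bracket is the case $m=n+2$ and the second is the case $m=n-1$. Hence the displayed expression collapses to $2\alpha^{n+2}+\alpha^{n-1}$, which is precisely the left-hand side of (P1d). Repeating the computation verbatim with $\beta$ (using (P1a) in place of (P1)) and with $\gamma$ (using (P1c)) yields the remaining two identities.

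There is essentially no obstacle here beyond keeping the index shifts straight: the only thing to verify is that the two trinomials produced after the substitution align exactly with the $m=n+2$ and $m=n-1$ cases of Lemma \ref{lem.mjwk6zf}, which they do. One could alternatively prove (P1d) directly by induction on $n$ from the defining recurrence, but the route through \eqref{eq.d16qg78} is shorter and makes transparent why the combination $2\alpha^{n+2}+\alpha^{n-1}$ — rather than a single power of $\alpha$ — is the natural object paired with the Perrin trinomial on the right.
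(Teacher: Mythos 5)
Your proof is correct, and it is exactly the argument the paper intends: the lemma is stated there as an immediate consequence of \eqref{eq.d16qg78} and Lemma \ref{lem.mjwk6zf}, and your substitution of $Q_{n}=2P_{n-2}+P_{n-5}$ (with the shifted indices) followed by recognizing the two trinomials as the $m=n+2$ and $m=n-1$ cases of (P1) is precisely that derivation, just written out in full.
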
 
Presently, we derive more algebraic properties of $\alpha$, $\beta$ and $\gamma$.

\medskip

Since $\alpha$ is a zero of \mbox{$x^3-x-1$}, we have
\begin{equation}\label{eq.i581x7l}
\alpha+1=\alpha^3\,;
\end{equation}
so that
\[
\alpha^3-\alpha=1\Rightarrow\alpha(\alpha-1)(\alpha+1)=1\,,
\]
from which we get
\begin{equation}\label{eq.kmklhor}
\alpha-1=1/\alpha^4,\quad \text{by \eqref{eq.i581x7l}}\,,
\end{equation}
and
\begin{equation}\label{eq.g937rmf}
\alpha^2-1=1/\alpha\,.
\end{equation}
We also write identity \eqref{eq.kmklhor} as
\begin{equation}
\alpha^4+1=\alpha^5\,.
\end{equation}
On account of identity \eqref{eq.kmklhor}, identity \eqref{eq.wiif5om} is
\begin{equation}\label{eq.uajx4zx}
\alpha/\beta+\beta/\alpha=1/\gamma^{4}\,,
\end{equation}
which also implies
\begin{equation}\label{eq.sclithe}
1/\alpha^2+1/\beta^2=1/\gamma^3\,.
\end{equation}
Addition and subtraction of \eqref{eq.i581x7l} and \eqref{eq.kmklhor} give
\begin{equation}\label{eq.vp1p6sm}
\alpha^7+1=2\alpha^5
\end{equation}
and
\begin{equation}\label{eq.ozggl72}
\alpha^7-1=2\alpha^4\,,
\end{equation}
while, upon multiplication, \eqref{eq.vp1p6sm} and \eqref{eq.ozggl72} give
\begin{equation}\label{eq.oqvb3np}
\alpha^{14}-1=4\alpha^{\,9}.
\end{equation}
Identities \eqref{eq.i581x7l} -- \eqref{eq.oqvb3np}, excluding \eqref{eq.uajx4zx} and \eqref{eq.sclithe}, can be collected into the following lemma.
\begin{lemma}
The following identities hold for integer $m$, for each set of values of $a$, $b$, $c$, $d$, $e$, $f$ and $g$ given in the appended table:
\begin{equation}\label{eq.zan2cs6}
a\alpha^{m+c}+b\alpha^{m+d}=f\alpha^{m+e}\,,
\end{equation}
\begin{equation}
a\beta^{m+c}+b\beta^{m+d}=f\beta^{m+e}
\end{equation}
and
\begin{equation}
a\gamma^{m+c}+b\gamma^{m+d}=f\gamma^{m+e}\,.
\end{equation}
\begin{tabular}{lllllllllllllllllllll}
\multicolumn{1}{c}{} & \multicolumn{1}{c}{} & \multicolumn{1}{c}{$a$} & \multicolumn{1}{c}{$b$} & \multicolumn{1}{c}{$c$} & \multicolumn{1}{c}{$d$} & \multicolumn{1}{c}{$e$} & \multicolumn{1}{c}{$f$} & \multicolumn{1}{c}{} & \multicolumn{1}{c}{} & \multicolumn{1}{c}{} & \multicolumn{1}{c}{} & \multicolumn{1}{c}{} & \multicolumn{1}{c}{} & \multicolumn{1}{c}{} & \multicolumn{1}{c}{$a$} & \multicolumn{1}{c}{$b$} & \multicolumn{1}{c}{$c$} & \multicolumn{1}{c}{$d$} & \multicolumn{1}{c}{$e$} & \multicolumn{1}{c}{$f$} \\ 
\cline{1-8}\cline{14-21}
\multicolumn{1}{c}{Set 1:} & \multicolumn{1}{c}{} & \multicolumn{1}{c}{$1$} & \multicolumn{1}{c}{$-1$} & \multicolumn{1}{c}{$1$} & \multicolumn{1}{c}{$-1$} & \multicolumn{1}{c}{$-2$} & \multicolumn{1}{c}{$1$} & \multicolumn{1}{c}{} & \multicolumn{1}{c}{} & \multicolumn{1}{c}{} & \multicolumn{1}{c}{} & \multicolumn{1}{c}{} & \multicolumn{1}{c}{Set 9:} & \multicolumn{1}{c}{} & \multicolumn{1}{c}{$1$} & \multicolumn{1}{c}{$1$} & \multicolumn{1}{c}{$1$} & \multicolumn{1}{c}{$-6$} & \multicolumn{1}{c}{$-1$} & \multicolumn{1}{c}{$2$} \\ 
\multicolumn{1}{c}{Set 2:} & \multicolumn{1}{c}{} & \multicolumn{1}{c}{$1$} & \multicolumn{1}{c}{$-1$} & \multicolumn{1}{c}{$1$} & \multicolumn{1}{c}{$-2$} & \multicolumn{1}{c}{$-1$} & \multicolumn{1}{c}{$1$} & \multicolumn{1}{c}{} & \multicolumn{1}{c}{} & \multicolumn{1}{c}{} & \multicolumn{1}{c}{} & \multicolumn{1}{c}{} & \multicolumn{1}{c}{Set 10:} & \multicolumn{1}{c}{} & \multicolumn{1}{c}{$2$} & \multicolumn{1}{c}{$1$} & \multicolumn{1}{c}{$2$} & \multicolumn{1}{c}{$-2$} & \multicolumn{1}{c}{$5$} & \multicolumn{1}{c}{$1$} \\ 
\multicolumn{1}{c}{Set 3:} & \multicolumn{1}{c}{} & \multicolumn{1}{c}{$1$} & \multicolumn{1}{c}{$1$} & \multicolumn{1}{c}{$-1$} & \multicolumn{1}{c}{$-2$} & \multicolumn{1}{c}{$1$} & \multicolumn{1}{c}{$1$} & \multicolumn{1}{c}{} & \multicolumn{1}{c}{} & \multicolumn{1}{c}{} & \multicolumn{1}{c}{} & \multicolumn{1}{c}{} & \multicolumn{1}{c}{Set 11:} & \multicolumn{1}{c}{} & \multicolumn{1}{c}{$1$} & \multicolumn{1}{c}{$-1$} & \multicolumn{1}{c}{$5$} & \multicolumn{1}{c}{$-2$} & \multicolumn{1}{c}{$2$} & \multicolumn{1}{c}{$2$} \\ 
\multicolumn{1}{c}{Set 4:} & \multicolumn{1}{c}{} & \multicolumn{1}{c}{$1$} & \multicolumn{1}{c}{$1$} & \multicolumn{1}{c}{$2$} & \multicolumn{1}{c}{$-2$} & \multicolumn{1}{c}{$3$} & \multicolumn{1}{c}{$1$} & \multicolumn{1}{c}{} & \multicolumn{1}{c}{} & \multicolumn{1}{c}{} & \multicolumn{1}{c}{} & \multicolumn{1}{c}{} & \multicolumn{1}{c}{Set 12:} & \multicolumn{1}{c}{} & \multicolumn{1}{c}{$1$} & \multicolumn{1}{c}{$-2$} & \multicolumn{1}{c}{$5$} & \multicolumn{1}{c}{$2$} & \multicolumn{1}{c}{$-2$} & \multicolumn{1}{c}{$1$} \\ 
\multicolumn{1}{c}{Set 5:} & \multicolumn{1}{c}{} & \multicolumn{1}{c}{$1$} & \multicolumn{1}{c}{$-1$} & \multicolumn{1}{c}{$3$} & \multicolumn{1}{c}{$2$} & \multicolumn{1}{c}{$-2$} & \multicolumn{1}{c}{$1$} & \multicolumn{1}{c}{} & \multicolumn{1}{c}{} & \multicolumn{1}{c}{} & \multicolumn{1}{c}{} & \multicolumn{1}{c}{} & \multicolumn{1}{c}{Set 13:} & \multicolumn{1}{c}{} & \multicolumn{1}{c}{$1$} & \multicolumn{1}{c}{$-1$} & \multicolumn{1}{c}{$7$} & \multicolumn{1}{c}{$-7$} & \multicolumn{1}{c}{$2$} & \multicolumn{1}{c}{$4$} \\ 
\multicolumn{1}{c}{Set 6:} & \multicolumn{1}{c}{} & \multicolumn{1}{c}{$1$} & \multicolumn{1}{c}{$-1$} & \multicolumn{1}{c}{$3$} & \multicolumn{1}{c}{$-2$} & \multicolumn{1}{c}{$2$} & \multicolumn{1}{c}{$1$} & \multicolumn{1}{c}{} & \multicolumn{1}{c}{} & \multicolumn{1}{c}{} & \multicolumn{1}{c}{} & \multicolumn{1}{c}{} & \multicolumn{1}{c}{Set 14:} & \multicolumn{1}{c}{} & \multicolumn{1}{c}{$1$} & \multicolumn{1}{c}{$-4$} & \multicolumn{1}{c}{$7$} & \multicolumn{1}{c}{$2$} & \multicolumn{1}{c}{$-7$} & \multicolumn{1}{c}{$1$} \\ 
\multicolumn{1}{c}{Set 7:} & \multicolumn{1}{c}{} & \multicolumn{1}{c}{$2$} & \multicolumn{1}{c}{$-1$} & \multicolumn{1}{c}{$-1$} & \multicolumn{1}{c}{$1$} & \multicolumn{1}{c}{$-6$} & \multicolumn{1}{c}{$1$} & \multicolumn{1}{c}{} & \multicolumn{1}{c}{} & \multicolumn{1}{c}{} & \multicolumn{1}{c}{} & \multicolumn{1}{c}{} & \multicolumn{1}{c}{Set 15:} & \multicolumn{1}{c}{} & \multicolumn{1}{c}{$1$} & \multicolumn{1}{c}{$4$} & \multicolumn{1}{c}{$-7$} & \multicolumn{1}{c}{$2$} & \multicolumn{1}{c}{$7$} & \multicolumn{1}{c}{$1$} \\ 
\multicolumn{1}{c}{Set 8:} & \multicolumn{1}{c}{} & \multicolumn{1}{c}{$2$} & \multicolumn{1}{c}{$-1$} & \multicolumn{1}{c}{$-1$} & \multicolumn{1}{c}{$-6$} & \multicolumn{1}{c}{$1$} & \multicolumn{1}{c}{$1$} & \multicolumn{1}{c}{} & \multicolumn{1}{c}{} & \multicolumn{1}{c}{} & \multicolumn{1}{c}{} & \multicolumn{1}{c}{} & \multicolumn{1}{c}{} & \multicolumn{1}{c}{} & \multicolumn{1}{c}{} & \multicolumn{1}{c}{} & \multicolumn{1}{c}{} & \multicolumn{1}{c}{} & \multicolumn{1}{c}{} & \multicolumn{1}{c}{} \\ 
\end{tabular}
\end{lemma}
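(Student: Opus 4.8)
My plan is to reduce the entire table to the handful of scalar relations \eqref{eq.i581x7l}--\eqref{eq.oqvb3np} that are already established. The first step is the trivial but essential remark that none of $\alpha,\beta,\gamma$ vanishes: by Vieta's formulas (cf.\ \eqref{eq.nj4qi47}) their product is $\alpha\beta\gamma=1$, so every integer power of each root is meaningful and one may multiply or divide any relation among them by an arbitrary power. In particular, for a fixed row of the table, the identity $a\alpha^{m+c}+b\alpha^{m+d}=f\alpha^{m+e}$ holds for every integer $m$ precisely when it holds for $m=0$ — multiply through by $\alpha^m$ — and likewise for $\beta$ and $\gamma$; so it is enough to treat $m=0$.

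The second step is a uniform criterion covering all three roots at once. Let $x$ denote any one of $\alpha,\beta,\gamma$ and put $\ell=\min\{c,d,e\}$. Multiplying $ax^{c}+bx^{d}=fx^{e}$ by $x^{-\ell}\neq 0$ turns it into the genuine polynomial identity $a x^{c-\ell}+b x^{d-\ell}-f x^{e-\ell}=0$. Since $\alpha,\beta,\gamma$ are exactly the three (simple, as the discriminant $-23$ is nonzero) roots of $x^3-x-1$, this polynomial vanishes at all three roots if and only if it is divisible by $x^3-x-1$ in $\R[x]$, which one verifies mechanically by the reduction $x^3\equiv x+1$. Thus the lemma amounts to fifteen such divisibilities, each to be checked once; equivalently, one may argue throughout in the Laurent ring $\R[x,x^{-1}]$, where $x$ is a unit and $x^3-x-1$ is still the operative relation, and skip the normalization by $\ell$.

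The third step is simply to recognize each row as one of the five known relations scaled by a power of the root. Dividing the claimed identity by $\alpha^{m+\ell}$ returns: for Sets~1--3 the relation $\alpha^3=\alpha+1$ of \eqref{eq.i581x7l} (equivalently \eqref{eq.kmklhor} or \eqref{eq.g937rmf}); for Sets~4--6 the relation $\alpha^5=\alpha^4+1$ recorded just after \eqref{eq.g937rmf}; for Sets~7--9 identity \eqref{eq.vp1p6sm}, namely $\alpha^7+1=2\alpha^5$; for Sets~10--12 identity \eqref{eq.ozggl72}, namely $\alpha^7-1=2\alpha^4$; and for Sets~13--15 identity \eqref{eq.oqvb3np}, namely $\alpha^{14}-1=4\alpha^9$. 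Because $\beta$ and $\gamma$ also satisfy $x^3=x+1$, the derivations of \eqref{eq.i581x7l}--\eqref{eq.oqvb3np} apply verbatim to them, so the $\beta$- and $\gamma$-versions require no separate work.

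I do not expect a genuinely hard step here: all the mathematical content sits in the earlier passage that produced \eqref{eq.i581x7l}--\eqref{eq.oqvb3np}, and what remains is the finite, routine matching of the fifteen parameter triples $(c,d,e)$ and coefficients $(a,b,f)$ to their parent relations together with the exponent arithmetic $c-\ell,\ d-\ell,\ e-\ell$. The only point demanding a little care is the bookkeeping when $c$, $d$ or $e$ is negative, and that is precisely what the normalization by $x^{-\ell}$ (or, alternatively, working in $\R[x,x^{-1}]$) is meant to dispose of.
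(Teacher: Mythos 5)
Your proposal is correct and is essentially the paper's own (implicit) argument: the paper introduces this lemma with the remark that identities \eqref{eq.i581x7l}--\eqref{eq.oqvb3np} ``can be collected into'' it, i.e.\ each row is one of the relations $\alpha^3=\alpha+1$, $\alpha^5=\alpha^4+1$, $\alpha^7+1=2\alpha^5$, $\alpha^7-1=2\alpha^4$, $\alpha^{14}-1=4\alpha^9$ multiplied by a suitable power of the (nonzero) root, exactly as in your third step. Your matching of Sets 1--3, 4--6, 7--9, 10--12 and 13--15 to these five parent relations checks out, and your added justifications (nonvanishing of the roots via $\alpha\beta\gamma=1$, and the divisibility-by-$x^3-x-1$ criterion) merely make explicit the bookkeeping the paper leaves to the reader.
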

The observation that if $a$, $b$, $c$, $d$, $e$ and $f$ are rational numbers and $\lambda$ and $\lambda^2$ are {\it linearly independent} irrational numbers, then $a\lambda^2+b\lambda+c=d\lambda^2+e\lambda+f$ if and only if $a=d$, $b=e$ and $c=f$ leads to the following properties.
\begin{lemma}
If $a$, $b$, $c$, $d$, $e$ and $f$ are rational numbers, then:
\makeatletter
\renewcommand\tagform@[1]{\maketag@@@{\ignorespaces#1\unskip\@@italiccorr}}
\begin{equation}\tag{P2}
a\alpha^2+b\alpha+c=d\alpha^2+e\alpha+f \iff \mbox{$a=d$, $b=e$ and $c=f$}\,,
\end{equation}
\begin{equation}\tag{P3}
a\beta^2+b\beta+c=d\beta^2+e\beta+f \iff \mbox{$a=d$, $b=e$ and $c=f$}\,,
\end{equation}
\begin{equation}\tag{P4}
a\gamma^2+b\gamma+c=d\gamma^2+e\gamma+f \iff \mbox{$a=d$, $b=e$ and $c=f$}\,,
\end{equation}
\begin{equation}\tag{P5}
\frac{{a\alpha ^2  + b\alpha  + c}}{{d\alpha ^2  + e\alpha  + f}} = \frac{{\left| {\begin{array}{*{20}c}
   a & e & d  \\
   b & {d + f} & e  \\
   c & d & f  \\
\end{array}} \right|}}{{\left| {\begin{array}{*{20}c}
   {d + f} & e & d  \\
   {d + e} & {d + f} & e  \\
   e & d & f  \\
\end{array}} \right|}}\alpha ^2  + \frac{{\left| {\begin{array}{*{20}c}
   {d + f} & a & d  \\
   {d + e} & b & e  \\
   e & c & f  \\
\end{array}} \right|}}{{\left| {\begin{array}{*{20}c}
   {d + f} & e & d  \\
   {d + e} & {d + f} & e  \\
   e & d & f  \\
\end{array}} \right|}}\alpha  + \frac{{\left| {\begin{array}{*{20}c}
   {d + f} & e & a  \\
   {d + e} & {d + f} & b  \\
   e & d & c  \\
\end{array}} \right|}}{{\left| {\begin{array}{*{20}c}
   {d + f} & e & d  \\
   {d + e} & {d + f} & e  \\
   e & d & f  \\
\end{array}} \right|}}\,,
\end{equation}
for $d$, $e$, $f$ not all zero; with similar expressions for $\beta$ and $\gamma$; and, in particular,
\begin{equation}\tag{P6}
\frac{1}{{d\alpha ^2  + e\alpha  + f}} = \frac{{e^2  - d^2  - fd}}{{\left| {\begin{array}{*{20}c}
   {d + f} & e & d  \\
   {d + e} & {d + f} & e  \\
   e & d & f  \\
\end{array}} \right|}}\alpha ^2  + \frac{{d^2  - ef}}{{\left| {\begin{array}{*{20}c}
   {d + f} & e & d  \\
   {d + e} & {d + f} & e  \\
   e & d & f  \\
\end{array}} \right|}}\alpha  + \frac{{d^2  + 2fd + f^2  - ed - e^2 }}{{\left| {\begin{array}{*{20}c}
   {d + f} & e & d  \\
   {d + e} & {d + f} & e  \\
   e & d & f  \\
\end{array}} \right|}}\,,
\end{equation}
\begin{equation}\tag{P7}
\frac{1}{{e\alpha  + f}} = \frac{{e^2 }}{{f^3  - e^2 f + e^3 }}\alpha ^2  - \frac{{ef}}{{f^3  - e^2 f + e^3 }}\alpha  + \frac{{f^2  - e^2 }}{{f^3  - e^2 f + e^3 }}
\end{equation}
and
\begin{equation}\tag{P8}
\frac{1}{{d\alpha ^2  + e\alpha }} = \frac{{e^2  - d^2 }}{{e^3  + d^3  - ed^2 }}\alpha ^2  + \frac{{d^2 }}{{e^3  + d^3  - ed^2 }}\alpha  + \frac{{d^2  - ed - e^2 }}{{e^3  + d^3  - ed^2 }}\,.
\end{equation}
\makeatother
\end{lemma}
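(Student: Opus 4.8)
The plan is to derive the entire lemma from one structural fact: the characteristic polynomial $x^3-x-1$ is irreducible over $\Q$. I would first check this by the rational root test --- its only possible rational zeros are $\pm1$, and neither works --- so $[\Q(\lambda):\Q]=3$ for each of $\lambda=\alpha,\beta,\gamma$, and therefore $\{1,\lambda,\lambda^2\}$ is a $\Q$-basis of $\Q(\lambda)$; in particular $1,\lambda,\lambda^2$ are linearly independent over $\Q$. Then (P2)--(P4) are immediate: bringing everything to one side, $(a-d)\lambda^2+(b-e)\lambda+(c-f)=0$ with rational coefficients forces $a=d$, $b=e$, $c=f$, while the converse is trivial. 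This is precisely the observation stated just before the lemma, now with its hypothesis (that $\lambda$ and $\lambda^2$ are linearly independent irrationals) verified.

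For (P5) I would argue as follows. When $d,e,f$ are not all zero, (P2) applied with $a=b=c=0$ shows $d\alpha^2+e\alpha+f\ne0$, so the quotient lies in the field $\Q(\alpha)$ and can be written as $A\alpha^2+B\alpha+C$ with rational $A,B,C$. Clearing the denominator, expanding $(d\alpha^2+e\alpha+f)(A\alpha^2+B\alpha+C)$, and reducing via $\alpha^3=\alpha+1$ and $\alpha^4=\alpha^2+\alpha$ (equivalently, via Lemma (P1)), I would compare coefficients of $\alpha^2$, $\alpha$, $1$ using (P2) to obtain the linear system
\[
\begin{pmatrix} d+f & e & d \\ d+e & d+f & e \\ e & d & f \end{pmatrix}\begin{pmatrix} A \\ B \\ C \end{pmatrix}=\begin{pmatrix} a \\ b \\ c \end{pmatrix}.
\]
Its coefficient matrix is exactly the one appearing in the denominators of (P5), and its determinant is nonzero because multiplication by the nonzero element $d\alpha^2+e\alpha+f$ is an injective, hence bijective, $\Q$-linear map on the three-dimensional space $\Q(\alpha)$ (up to sign this determinant is the field norm of $d\alpha^2+e\alpha+f$). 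Cramer's rule then gives $A,B,C$ as the three displayed ratios of determinants, since replacing the first, second, or third column of the coefficient matrix by $(a,b,c)^{\mathsf T}$ yields precisely the numerators in (P5). The statements for $\beta$ and $\gamma$ follow by the same computation.

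Finally, (P6)--(P8) are specializations. I would set $(a,b,c)=(0,0,1)$ in (P5) and expand each numerator determinant along the column containing the single $1$; the surviving $2\times2$ minors are $e^2-d^2-fd$, $d^2-ef$, and $(d+f)^2-e(d+e)=d^2+2fd+f^2-ed-e^2$, which are the numerators of (P6). Then (P7) is (P6) with $d=0$, where the denominator determinant collapses to $f^3-e^2f+e^3$, and (P8) is (P6) with $f=0$, where it collapses to $e^3+d^3-ed^2$; in both cases only the numerator simplifications remain to be checked, and these are routine.

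I do not expect a real obstacle. The part requiring the most care is the bookkeeping in (P5) --- carrying out the product and the $\alpha^3,\alpha^4$ reductions so that the coefficient matrix emerges in exactly the form displayed in the statement, keeping the Cramer-rule signs correct, and observing that the lone hypothesis ``$d,e,f$ not all zero'' is simultaneously what makes the quotient meaningful and what guarantees the relevant determinant does not vanish.
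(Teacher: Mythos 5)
Your proof is correct and follows exactly the route the paper intends: the paper states this lemma without proof, justifying it only by the preceding observation that $1,\lambda,\lambda^2$ are linearly independent over $\Q$, and your argument (irreducibility of $x^3-x-1$ by the rational root test, hence (P2)--(P4); then clearing denominators, reducing via $\alpha^3=\alpha+1$, and applying Cramer's rule to the resulting $3\times3$ system for (P5)--(P8)) is the standard way to fill in those details. Your additional observations --- that the coefficient matrix is the matrix of multiplication by $d\alpha^2+e\alpha+f$, so its determinant is the field norm and is nonzero precisely when $d,e,f$ are not all zero --- supply a justification for the nonvanishing of the denominators that the paper leaves implicit.
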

\subsubsection{A note on notation}
Since any function $F(\alpha,a,b,\ldots)$ or $G(\beta,a,b,\ldots)$ or $H(\gamma,a,b,\ldots)$, where $a,b,\ldots$ are rational numbers, can be considered a vector with three components, we can write
\begin{equation}
F=\left(F\right)_{\alpha^2}\alpha^2+\left(F\right)_{\alpha}\alpha+\left(F\right)_{\alpha^0}\,,
\end{equation}
\begin{equation}
G=\left(G\right)_{\beta^2}\beta^2+\left(G\right)_{\beta}\beta+\left(G\right)_{\beta^0}
\end{equation}
and
\begin{equation}
H=\left(H\right)_{\gamma^2}\gamma^2+\left(H\right)_{\gamma}\gamma+\left(H\right)_{\gamma^0}\,;
\end{equation}
so that $\left(F\right)_{\alpha^j}$, $j\in\{0,1,2\}$, denotes the $\alpha^j$ component of $F$, etc. 
Thus, for example, from identities P1, P5 and P6, we can write
\begin{equation}\tag{P9}
\left(\alpha^n\right)_{\alpha^2}=P_{n-4},\quad\left(\alpha^n\right)_{\alpha}=P_{n-3},\quad\left(\alpha^n\right)_{\alpha^0}=P_{n-5}\,,
\end{equation}
\begin{equation}\tag{P10}
\left(\frac{{a\alpha ^2  + b\alpha  + c}}{{d\alpha ^2  + e\alpha  + f}}\right)_{\alpha^2} = \frac{{\left| {\begin{array}{*{20}c}
   a & e & d  \\
   b & {d + f} & e  \\
   c & d & f  \\
\end{array}} \right|}}{{\left| {\begin{array}{*{20}c}
   {d + f} & e & d  \\
   {d + e} & {d + f} & e  \\
   e & d & f  \\
\end{array}} \right|}}
\end{equation}
and
\begin{equation}\tag{P11}
\left(\frac{1}{{d\alpha ^2  + e\alpha  + f}}\right)_{\alpha^2}= \frac{{e^2  - d^2  - fd}}{{\left| {\begin{array}{*{20}c}
   {d + f} & e & d  \\
   {d + e} & {d + f} & e  \\
   e & d & f  \\
\end{array}} \right|}}\,.
\end{equation}
\begin{theorem}\label{thm.c2lx8du}
For all integers $n$, we have
\begin{equation}\label{eq.q0kf6d2}
P_{-n}=P_{n-7}^2-P_{n-6}P_{n-8}
\end{equation}
and
\begin{equation}\label{eq.yna0jdm}
2Q_{-n}=Q_n^2-Q_{2n}\,.
\end{equation}
\end{theorem}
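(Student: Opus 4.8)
The plan is to handle the two identities separately, in each case passing to the Binet-type representations of the excerpt and exploiting $\alpha\beta\gamma=1$.

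For \eqref{eq.yna0jdm} I would begin from \eqref{eq.m0eml7d} and square it:
\[
Q_n^2=(\alpha^n+\beta^n+\gamma^n)^2=(\alpha^{2n}+\beta^{2n}+\gamma^{2n})+2\big((\alpha\beta)^n+(\beta\gamma)^n+(\gamma\alpha)^n\big)=Q_{2n}+2\big((\alpha\beta)^n+(\beta\gamma)^n+(\gamma\alpha)^n\big).
\]
Then I would use Vieta's relations \eqref{eq.nj4qi47} in the form $\alpha\beta=1/\gamma$, $\beta\gamma=1/\alpha$, $\gamma\alpha=1/\beta$ to rewrite the bracketed sum as $\gamma^{-n}+\alpha^{-n}+\beta^{-n}$, which equals $Q_{-n}$ by \eqref{eq.m0eml7d} applied at $-n$. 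Rearranging gives \eqref{eq.yna0jdm} at once; this half is essentially immediate once the cross terms are recognized.

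For \eqref{eq.q0kf6d2} the idea is to evaluate $\alpha^{-n}$ in two ways. Since $\alpha\beta\gamma=1$, we have $\alpha^{-n}=(\beta\gamma)^n=\beta^n\gamma^n$; substituting (P1a) and (P1c) and expanding, the product is a sum of terms, each a product of the Padovan numbers $P_{n-4},P_{n-3},P_{n-5}$ times one of $(\beta\gamma)^2$, $\beta\gamma(\beta+\gamma)$, $\beta^2+\gamma^2$, $\beta\gamma$, $\beta+\gamma$, $1$. Using $\beta+\gamma=-\alpha$, $\beta\gamma=1/\alpha=\alpha^2-1$ (by \eqref{eq.g937rmf}), $\beta^2+\gamma^2=(\beta+\gamma)^2-2\beta\gamma=2-\alpha^2$ (cf.\ \eqref{eq.mo0lb2w}), and $(\beta\gamma)^2=\alpha^{-2}=(\alpha^2-1)^2=-\alpha^2+\alpha+1$ (reducing via $\alpha^3=\alpha+1$), the whole product collapses to the form $u\alpha^2+v\alpha+w$ with $u,v,w$ quadratic in $P_{n-4},P_{n-3},P_{n-5}$. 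On the other hand, (P9) applied at $-n$ gives $\alpha^{-n}=P_{-n-4}\alpha^2+P_{-n-3}\alpha+P_{-n-5}$. Comparing the coefficients of $\alpha$ via (P2) — and only this coefficient is needed — yields $P_{-n-3}=P_{n-4}^2-P_{n-3}P_{n-5}$; replacing $n$ by $n-3$ produces exactly \eqref{eq.q0kf6d2}. (The $\alpha^2$- and constant coefficients similarly give companion expressions for $P_{-n-4}$ and $P_{-n-5}$, which we do not need.)

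The only delicate point is the algebraic bookkeeping in the Padovan case: one must carefully reduce the symmetric functions of $\beta,\gamma$ — in particular $(\beta\gamma)^2=\alpha^{-2}$ — to polynomials in $\alpha$ of degree at most two before invoking the uniqueness statement (P2). After that reduction, collecting the coefficient of $\alpha^1$ is routine and the single index shift finishes the argument. Both identities hold for every integer $n$, since (P1), (P9) and \eqref{eq.m0eml7d} are valid for all integers and $\alpha\beta\gamma=1$ unconditionally.
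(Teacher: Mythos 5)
Your proposal is correct. For the Perrin identity \eqref{eq.yna0jdm} you do exactly what the paper does: square \eqref{eq.m0eml7d} and recognize the cross terms $(\alpha\beta)^n+(\beta\gamma)^n+(\gamma\alpha)^n$ as $Q_{-n}$ via $\alpha\beta\gamma=1$. For the Padovan identity \eqref{eq.q0kf6d2} your route differs in a worthwhile way. The paper computes $\left(\alpha^{-n}\right)_{\alpha^2}=\left(1/\alpha^n\right)_{\alpha^2}$ by feeding the representation P1 into the reciprocal formula P6/P11; this produces a quotient whose denominator is the $3\times3$ determinant with entries $P_{n-2},\dots,P_{n-5}$, which the paper asserts (without proof) equals $1$ for all $n$, and whose numerator $P_{n-3}^2-P_{n-4}^2-P_{n-4}P_{n-5}$ must then be refactored as $P_{n-3}^2-P_{n-2}P_{n-4}$ using the recurrence. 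You instead write $\alpha^{-n}=\beta^n\gamma^n$, multiply the representations P1a and P1c, and reduce the symmetric functions of $\beta,\gamma$ to polynomials in $\alpha$ of degree at most two before invoking P2; your reductions ($\beta+\gamma=-\alpha$, $\beta\gamma=\alpha^2-1$, $\beta\gamma(\beta+\gamma)=-1$, $\beta^2+\gamma^2=2-\alpha^2$, $(\beta\gamma)^2=-\alpha^2+\alpha+1$) are all correct, and collecting the coefficient of $\alpha$ indeed gives $P_{-n-3}=P_{n-4}^2-P_{n-3}P_{n-5}$, which the shift $n\mapsto n-3$ turns into \eqref{eq.q0kf6d2}. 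What your approach buys is the elimination of the determinant evaluation and the subsequent factorization step --- everything is reduced to the irreducibility-based uniqueness statement P2; what the paper's approach buys is uniformity with the component calculus used throughout the rest of the article. Note that you read off the $\alpha$-component where the paper reads off the $\alpha^2$-component, so your index shift ($n\mapsto n-3$) differs from the paper's ($n\mapsto n-4$); both land on the same identity.
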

\begin{proof}
We have
\[
\left(\alpha^{-n}\right)_{\alpha^2}=\left(\frac1{\alpha^n}\right)_{\alpha^2}\,,
\]
which by P1 and P2 gives
\[
P_{ - n - 4}  = \frac{{P_{n - 3}^2  - P_{n - 4}^2  - P_{n - 4} P_{n - 5} }}{{\left| {\begin{array}{*{20}c}
   {P_{n - 4}  + P_{n - 5} } & {P_{n - 3} } & {P_{n - 4} }  \\
   {P_{n - 3}  + P_{n - 4} } & {P_{n - 4}  + P_{n - 5} } & {P_{n - 3} }  \\
   {P_{n - 3} } & {P_{n - 4} } & {P_{n - 5} }  \\
\end{array}} \right|}}= \frac{{P_{n - 3}^2  - P_{n - 4}^2  - P_{n - 4} P_{n - 5} }}{{\left| {\begin{array}{*{20}c}
   {P_{n - 2} } & {P_{n - 3} } & {P_{n - 4} }  \\
   {P_{n - 1} } & {P_{n - 2} } & {P_{n - 3} }  \\
   {P_{n - 3} } & {P_{n - 4} } & {P_{n - 5} }  \\
\end{array}} \right|}}\,,
\]
in which the numerator factors into $P_{n-3}^2-P_{n-2}P_{n-4}$ while the determinant in the denominator evaluates to $1$ for all $n$; and hence identity \eqref{eq.q0kf6d2}.

\medskip

Squaring \eqref{eq.m0eml7d} gives
\[
\begin{split}
Q_n^2  &= \alpha ^{2n}  + \beta ^{2n}  + \gamma ^{2n}  + 2\beta ^n \gamma ^n  + 2\alpha ^n \gamma ^n  + 2\alpha ^n \beta ^n\\
&\qquad= \underbrace{\alpha ^{2n}  + \beta ^{2n}  + \gamma ^{2n}}_{Q_{2n}}  + \underbrace{2\alpha ^{ - n}  + 2\beta ^{ - n}  + 2\gamma ^{ - n}}_{2Q_{-n}}\,,
\end{split}
\]
from which \eqref{eq.yna0jdm} follows.
\end{proof}
\begin{theorem}
The following identities hold for integers $p$ and $n$:
\begin{equation}\label{eq.dw7915i}
Q_{ - n} P_{p} - P_{p-n} = Q_nP_{p + n} -P_{p + 2n}
\end{equation}
and
\begin{equation}\label{eq.oyj86q9}
P_pP_{-n-3}-P_{p+1}P_{-n-4}=P_{p+n+1}P_{n-4}-P_{p+n}P_{n-3}\,.
\end{equation}

\end{theorem}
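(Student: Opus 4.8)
The plan is to recast both identities as $\alpha^2$-component identities in $\mathbb{Q}(\alpha)$ and to discharge them using the algebraic relations already assembled for $\alpha,\beta,\gamma$; the decisive tool in both halves is P9, $\big(\alpha^{m}\big)_{\alpha^2}=P_{m-4}$ (valid for all integers $m$), together with linearity of the $\alpha^2$-component map. For \eqref{eq.dw7915i} the extra input is the product relation $\alpha\beta\gamma=1$, and for \eqref{eq.oyj86q9} it is Theorem~\ref{thm.c2lx8du}.

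For \eqref{eq.dw7915i} I would first establish a $\mathbb{Q}(\alpha)$-identity that collapses to the claim. From $Q_n=\alpha^n+\beta^n+\gamma^n$ we have $\beta^n+\gamma^n=Q_n-\alpha^n$, while $\alpha\beta\gamma=1$ (from \eqref{eq.nj4qi47}) gives $\alpha^n\beta^n=\gamma^{-n}$ and $\alpha^n\gamma^n=\beta^{-n}$; hence
\[
Q_{-n}-\alpha^{-n}=\beta^{-n}+\gamma^{-n}=\alpha^n(\beta^n+\gamma^n)=\alpha^n(Q_n-\alpha^n)=Q_n\alpha^n-\alpha^{2n}.
\]
Multiplying through by $\alpha^{p+4}$ gives $Q_{-n}\alpha^{p+4}-\alpha^{p-n+4}=Q_n\alpha^{p+n+4}-\alpha^{p+2n+4}$ in $\mathbb{Q}(\alpha)$. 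Taking $\alpha^2$-components and using P9 (with linearity of the component map, and $Q_{-n},Q_n\in\mathbb{Z}$) turns the two sides into $Q_{-n}P_p-P_{p-n}$ and $Q_nP_{p+n}-P_{p+2n}$, which is \eqref{eq.dw7915i}.

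For \eqref{eq.oyj86q9} I would instead build addition formulas for the Padovan numbers out of P1. Multiplying $\alpha^n=\alpha^2P_{n-4}+\alpha P_{n-3}+P_{n-5}$ by $\alpha^{p+5}$ and by $\alpha^{p+4}$ and extracting $\alpha^2$-components via P9 yields
\[
P_{p+n+1}=P_{p+3}P_{n-4}+P_{p+2}P_{n-3}+P_{p+1}P_{n-5},\qquad P_{p+n}=P_{p+2}P_{n-4}+P_{p+1}P_{n-3}+P_{p}P_{n-5}.
\]
Plugging these into the right-hand side of \eqref{eq.oyj86q9}, the $P_{p+2}$ terms cancel and, after substituting $P_{p+3}=P_{p+1}+P_p$, it reduces to $P_{p+1}\big(P_{n-4}^2+P_{n-5}P_{n-4}-P_{n-3}^2\big)+P_p\big(P_{n-4}^2-P_{n-5}P_{n-3}\big)$. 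By Theorem~\ref{thm.c2lx8du} (identity \eqref{eq.q0kf6d2} at $n+3$ and at $n+4$) one has $P_{n-4}^2-P_{n-3}P_{n-5}=P_{-n-3}$ and, using $P_{n-2}=P_{n-4}+P_{n-5}$, $P_{n-4}^2+P_{n-5}P_{n-4}-P_{n-3}^2=P_{n-2}P_{n-4}-P_{n-3}^2=-P_{-n-4}$. Hence the right-hand side equals $P_pP_{-n-3}-P_{p+1}P_{-n-4}$, which is \eqref{eq.oyj86q9}.

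I do not anticipate a genuine obstacle in either part. In the proof of \eqref{eq.dw7915i} the one point needing a line of justification is that $\beta^{-n}+\gamma^{-n}$ — a priori just a complex number — in fact lies in $\mathbb{Q}(\alpha)$, which is exactly what the chain of equalities above exhibits; from there the $\alpha^2$-component extraction is immediate. In the proof of \eqref{eq.oyj86q9} the difficulty is purely clerical: one must pick the multipliers $\alpha^{p+5}$ and $\alpha^{p+4}$ so that the $\alpha^2$-components land precisely on $P_{p+n+1}$ and $P_{p+n}$, verify that the $P_{p+2}$ terms drop out, and recognise the two surviving quadratic combinations of Padovan numbers as $P_{-n-3}$ and $-P_{-n-4}$ through Theorem~\ref{thm.c2lx8du}. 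The whole argument is bookkeeping with indices, with no step that resists a direct computation.
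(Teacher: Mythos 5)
Your proposal is correct in both parts. For \eqref{eq.dw7915i} your argument is essentially the paper's own proof with the roles of the roots permuted: the paper sets $x=\alpha$, $y=\beta$ in $x^{-n}+y^{-n}=(xy)^{-n}(x^n+y^n)$, multiplies by $\gamma^{p+4}$, applies \eqref{eq.gtb313p} and reads off $\gamma^2$-components, whereas you single out $\alpha$ and pair $\beta$ with $\gamma$; by the symmetry of the roots the two computations are identical, so there is nothing further to say there.

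For \eqref{eq.oyj86q9} you take a genuinely different route. The paper stays inside the conjugate-pair formalism: it substitutes $x=\alpha$, $y=\beta$ into $\frac{x^{-n}-y^{-n}}{x-y}=-(xy)^{-n}\frac{x^{n}-y^{n}}{x-y}$, uses \eqref{eq.rn0hi2p} on both sides, multiplies by $\gamma^{p+4}$ and extracts the $\gamma^2$-component, which delivers \eqref{eq.oyj86q9} in one stroke. You instead derive the Padovan addition formulas $P_{p+n+1}=P_{p+3}P_{n-4}+P_{p+2}P_{n-3}+P_{p+1}P_{n-5}$ and $P_{p+n}=P_{p+2}P_{n-4}+P_{p+1}P_{n-3}+P_{p}P_{n-5}$ from Lemma \ref{lem.mjwk6zf} (these are instances of the product rule the paper records later as $P_{m+n}=P_mP_{n-5}+P_{m+1}P_{n-3}+P_{m+2}P_{n-4}$), expand the right-hand side of \eqref{eq.oyj86q9}, and identify the two surviving quadratic forms via \eqref{eq.q0kf6d2}. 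I checked the bookkeeping: the $P_{p+2}$ terms do cancel; \eqref{eq.q0kf6d2} at $n+3$ gives $P_{n-4}^2-P_{n-3}P_{n-5}=P_{-n-3}$; and at $n+4$ it gives $P_{-n-4}=P_{n-3}^2-P_{n-2}P_{n-4}$, so $P_{n-2}P_{n-4}-P_{n-3}^2=-P_{-n-4}$ as you claim. The paper's route is shorter and needs only \eqref{eq.rn0hi2p}; yours trades that for a dependence on Theorem \ref{thm.c2lx8du} (proved earlier, so there is no circularity) and has the side benefit of exhibiting \eqref{eq.oyj86q9} as a purely integer identity among Padovan numbers, verifiable without leaving $\mathbb{Z}$ once the addition formula and \eqref{eq.q0kf6d2} are in hand.
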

\begin{proof}
Set $x=\alpha$ and $y=\beta$ in the identity
\[
x^{-n}+y^{-n}=(xy)^{-n}(x^n+y^n)\,,
\]
multiply through by $\gamma^{p+4}$ and make use of identity \eqref{eq.gtb313p} to obtain 
\[
Q_{-n}\gamma^{p+4}-\gamma^{p-n+4}=Q_n\gamma^{p+n+4}-\gamma^{p+2n+4}\,,
\]
from which identity \eqref{eq.dw7915i} follows upon use of the $\gamma$ version of property P9.

\medskip

Set $x=\alpha$ and $y=\beta$ in the identity
\[
\frac{{x^{ - n}  - y^{ - n} }}{{x - y}} =  - (xy)^{ - n} \frac{{x^n  - y^n }}{{x - y}}\,,
\]
multiply through by $\gamma^{p+4}$ and make use of identity \eqref{eq.rn0hi2p} to obtain
\[
\gamma ^{p + 4} P_{ - n - 3}  - \gamma ^{p + 5} P_{ - n - 4}  = \gamma ^{p + n + 5} P_{n - 4}  - \gamma ^{p + n + 4} P_{n - 3}\,, 
\]
which then yields identity \eqref{eq.oyj86q9}.
\end{proof}
\begin{corollary}
Let $\lambda\in\{p: P_p=0\}$, that is $\lambda\in\{-17, -8, -4, -3, -1\}$. Then the following identities hold for any integer $n$:
\begin{equation}
P_{ - n}  = P_{2n + 3\lambda }  - Q_{n + \lambda } P_{n + 2\lambda }\,,
\end{equation}
\begin{equation}
P_{n + \lambda } Q_{ - n}  = P_{2n + \lambda } Q_n  - P_{3n + \lambda }
\end{equation}
\begin{equation}
Q_{ - n}  = Q_n P_n  - Q_{n - 1} P_{n - 2}  - P_{2n - 2}\,,
\end{equation}
\begin{equation}
P_{\lambda  + 1} P_{ - n}  = P_{\lambda  + n - 4} P_{n - 7}  - P_{\lambda  + n - 3} P_{n - 8}
\end{equation}
and
\begin{equation}
P_{\lambda  - 1} P_{ - n}  = P_{\lambda  + n - 3} P_{n - 7}  - P_{\lambda  + n - 4} P_{n - 6}\,.
\end{equation}
\end{corollary}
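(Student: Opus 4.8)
The plan is to obtain all five identities as specializations of the two identities \eqref{eq.dw7915i} and \eqref{eq.oyj86q9} of the preceding theorem, the only extra ingredient being the defining property $P_\lambda=0$ of the exceptional indices; no further algebraic facts about $\alpha,\beta,\gamma$ are needed. (That $\{p:P_p=0\}$ equals the displayed five-element set is a finite verification — since the two complex characteristic roots have modulus less than $1$, their powers dominate $P_n$ as $n\to-\infty$ and grow in modulus, so $P_n$ vanishes only for finitely many negative $n$ — and this plays no role below, where only $P_\lambda=0$ is used.)

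For the second identity, I would replace $p$ by $n+\lambda$ in \eqref{eq.dw7915i}; the term $P_{p-n}$ on the left becomes $P_\lambda=0$, and what remains is exactly $P_{n+\lambda}Q_{-n}=P_{2n+\lambda}Q_n-P_{3n+\lambda}$. For the first identity I would instead replace $p$ by $\lambda$ and $n$ by $n+\lambda$ in \eqref{eq.dw7915i}; this time it is the coefficient $P_p=P_\lambda$ of $Q_{-n}$ that vanishes, so the relation collapses to $-P_{-n}=Q_{n+\lambda}P_{n+2\lambda}-P_{2n+3\lambda}$, which rearranges to the claim. For the fourth and fifth identities I would use \eqref{eq.oyj86q9} with, respectively, $p=\lambda$ (so the factor $P_p=P_\lambda$ of $P_{-n-4}$ drops, leaving $P_{\lambda+1}P_{-n-4}=P_{\lambda+n}P_{n-3}-P_{\lambda+n+1}P_{n-4}$) and $p=\lambda-1$ (so the factor $P_{p+1}=P_\lambda$ of $P_{-n-4}$ drops, leaving $P_{\lambda-1}P_{-n-3}=P_{\lambda+n}P_{n-4}-P_{\lambda+n-1}P_{n-3}$); a final index shift $n\mapsto n-4$ in the first case and $n\mapsto n-3$ in the second turns $P_{-n-4}$, respectively $P_{-n-3}$, into $P_{-n}$ and produces the stated right-hand sides.

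The third identity is the only one that does not come from a single substitution. Taking $p=0$ in \eqref{eq.dw7915i} (note $P_0=1$) gives $Q_{-n}=P_{-n}+Q_nP_n-P_{2n}$. Now apply the first identity at the admissible value $\lambda=-1$, namely $P_{-n}=P_{2n-3}-Q_{n-1}P_{n-2}$, and simplify using the Padovan recurrence in the form $P_{2n}=P_{2n-2}+P_{2n-3}$; the $P_{2n-3}$ terms cancel and one is left with $Q_{-n}=Q_nP_n-Q_{n-1}P_{n-2}-P_{2n-2}$.

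There is no genuinely hard step here: the entire content is, for each identity, to choose the substitution into \eqref{eq.dw7915i} or \eqref{eq.oyj86q9} that forces a Padovan factor to vanish, and then to carry out the index shifts carefully. The only point that takes a moment is the third identity, where one has to notice that it is cleanest to chain the $p=0$ case of \eqref{eq.dw7915i} with the $\lambda=-1$ case of the corollary's own first identity rather than to look for a one-step specialization.
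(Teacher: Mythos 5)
Your proposal is correct, and it is exactly the derivation the paper intends: the corollary is stated without proof immediately after the theorem containing \eqref{eq.dw7915i} and \eqref{eq.oyj86q9}, and your substitutions ($p=n+\lambda$ and $(p,n)\mapsto(\lambda,n+\lambda)$ in \eqref{eq.dw7915i}; $p=\lambda$ and $p=\lambda-1$ in \eqref{eq.oyj86q9} followed by the index shifts; and the $p=0$ case chained with the $\lambda=-1$ instance plus the recurrence for the third identity) all check out. No gaps.
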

\begin{lemma}\label{lem.ytluwst}
The following identities hold for integers $r$, $s$ and $t$, $s\ne 0$:
\begin{equation}\label{eq.jkdhfys}
\left((\alpha^r+\beta^r)\gamma^t\right)_{\gamma^2}=Q_rP_{t-4}-P_{r+t-4}
\end{equation}
and
\begin{equation}\label{eq.df8mpe3}
\left( {\frac{{\alpha ^r  - \beta ^r }}{{\alpha ^s  - \beta ^s }}\gamma ^t } \right)_{\gamma ^2 }  = \frac{{\left| {\begin{array}{*{20}c}
   {P_{t - 3} P_{r - 4}  - P_{t - 4} P_{r - 3} } & {P_{s - 4} } & 0  \\
   {P_{t - 2} P_{r - 4}  - P_{t - 3} P_{r - 3} } & { - P_{s - 3} } & {P_{s - 4} }  \\
   {P_{t - 4} P_{r - 4}  - P_{t - 5} P_{r - 3} } & 0 & { - P_{s - 3} }  \\
\end{array}} \right|}}{{\left| {\begin{array}{*{20}c}
   { - P_{s - 3} } & {P_{s - 4} } & 0  \\
   {P_{s - 4} } & { - P_{s - 3} } & {P_{s - 4} }  \\
   {P_{s - 4} } & 0 & { - P_{s - 3} }  \\
\end{array}} \right|}}\,.
\end{equation}

\end{lemma}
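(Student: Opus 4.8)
The plan is to express both quantities as elements of the $3$-dimensional space spanned by $\gamma^2,\gamma,1$ and then read off the $\gamma^2$-component using the machinery already developed, principally \eqref{eq.gtb313p}, \eqref{eq.rn0hi2p}, the $\gamma$-form of Lemma~\ref{lem.mjwk6zf} (identity P1 for $\gamma$), and properties P5 and P9.

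For \eqref{eq.jkdhfys}, I would start from \eqref{eq.gtb313p}, which gives $\alpha^r+\beta^r=Q_r-\gamma^r$. Multiplying by $\gamma^t$ yields $(\alpha^r+\beta^r)\gamma^t=Q_r\gamma^t-\gamma^{r+t}$. Since $Q_r$ is an integer and the operation $X\mapsto(X)_{\gamma^2}$ is linear over the rationals on the span of $\gamma^2,\gamma,1$ (uniqueness of that representation being P4), the $\gamma$-analogue of P9 gives at once $\bigl((\alpha^r+\beta^r)\gamma^t\bigr)_{\gamma^2}=Q_r(\gamma^t)_{\gamma^2}-(\gamma^{r+t})_{\gamma^2}=Q_rP_{t-4}-P_{r+t-4}$. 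This half is routine; the only choice is the substitution.

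For \eqref{eq.df8mpe3}, I would invoke \eqref{eq.rn0hi2p} twice to write
\[
\frac{\alpha^r-\beta^r}{\alpha^s-\beta^s}=\frac{(\alpha^r-\beta^r)/(\alpha-\beta)}{(\alpha^s-\beta^s)/(\alpha-\beta)}=\frac{P_{r-3}-\gamma P_{r-4}}{P_{s-3}-\gamma P_{s-4}},
\]
which is legitimate for $s\ne0$, since then $\alpha^s\ne\beta^s$ ($\alpha/\beta$ is not a root of unity because $|\alpha|\ne|\beta|$) and, equivalently, $P_{s-3}$ and $P_{s-4}$ are not both zero. Multiplying through by $\gamma^t$, the quantity in question becomes $\dfrac{P_{r-3}\gamma^t-P_{r-4}\gamma^{t+1}}{P_{s-3}-P_{s-4}\gamma}$. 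Next I would expand the numerator in the basis $\{\gamma^2,\gamma,1\}$ via the $\gamma$-form of P1, using $\gamma^t=P_{t-4}\gamma^2+P_{t-3}\gamma+P_{t-5}$ and $\gamma^{t+1}=P_{t-3}\gamma^2+P_{t-2}\gamma+P_{t-4}$, which presents the numerator as $a\gamma^2+b\gamma+c$ with $a=P_{r-3}P_{t-4}-P_{r-4}P_{t-3}$, $b=P_{r-3}P_{t-3}-P_{r-4}P_{t-2}$, $c=P_{r-3}P_{t-5}-P_{r-4}P_{t-4}$, while the denominator is already $d\gamma^2+e\gamma+f$ with $d=0$, $e=-P_{s-4}$, $f=P_{s-3}$, not all zero, so P5 applies. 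Substituting these values into the $\gamma$-version of P5 yields a ratio of two $3\times3$ determinants; the displayed form \eqref{eq.df8mpe3} then follows by multiplying each of the three columns of both the P5 numerator and P5 denominator matrices by $-1$ — an operation that multiplies each determinant by $(-1)^3$ and hence leaves the quotient unchanged, but turns the P5 matrices into precisely those displayed (note in particular that the first-column entries $P_{t-3}P_{r-4}-P_{t-4}P_{r-3}$, $P_{t-2}P_{r-4}-P_{t-3}P_{r-3}$, $P_{t-4}P_{r-4}-P_{t-5}P_{r-3}$ are exactly $-a$, $-b$, $-c$).

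The difficulty is entirely bookkeeping, not ideas: getting the index shift for $\gamma^{t+1}$ right in P1, carrying the three column sign changes consistently through numerator and denominator, and matching entries against P5. The hypothesis $s\ne0$ is used exactly where the reduction via \eqref{eq.rn0hi2p} forms the ratio and where P5's denominator determinant $e^3+f^3-e^2f$ (with $e=-P_{s-4}$, $f=P_{s-3}$) must be nonzero; $s=0$ is the only index at which $P_{s-3}$ and $P_{s-4}$ both vanish.
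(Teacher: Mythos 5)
Your proposal is correct and follows essentially the same route as the paper: identity \eqref{eq.gtb313p} together with P1c and P9/P10 for \eqref{eq.jkdhfys}, and identity \eqref{eq.rn0hi2p} (applied to both numerator and denominator) together with P1c and P5/P10 for \eqref{eq.df8mpe3}. The only difference is cosmetic: the paper writes the ratio as $(\gamma^{t+1}P_{r-4}-\gamma^tP_{r-3})/(\gamma P_{s-4}-P_{s-3})$ so that P10 produces the displayed determinants directly, whereas your opposite sign normalization forces the (valid) final step of negating all three columns of both matrices.
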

\begin{proof}
Using identity \eqref{eq.gtb313p}, we have
\[
(\alpha ^r  + \beta ^r )\gamma ^t  = (Q_r  - \gamma ^r )\gamma ^t  = Q_r \gamma ^t  - \gamma ^{r + t}\,,
\]
from which identity \eqref{eq.jkdhfys} follows when we use properties P1c and P10.

\medskip

By \eqref{eq.rn0hi2p}, 
\begin{equation}
\frac{{\alpha ^r  - \beta ^r }}{{\alpha ^s  - \beta ^s }}\gamma ^t  = \frac{{\alpha ^r  - \beta ^r }}{{\alpha  - \beta }}\frac{{\alpha  - \beta }}{{\alpha ^s  - \beta ^s }}\gamma ^t  = \frac{{\gamma ^{t + 1} P_{r - 4}  - \gamma ^t P_{r - 3} }}{{\gamma P_{s - 4}  - P_{s - 3} }}\,,
\end{equation}
from which identity \eqref{eq.df8mpe3} follows upon application of properties P1c and P10.
\end{proof}
\begin{lemma}
Let $a$ and $b$ be rational numbers and $f$ and $g$ functions of $\alpha$. The $\alpha^j$ components $\left(\cdot\right)_{\alpha^j}$ have the following composition rules:
\begin{equation}
\left(af\right)_{\alpha ^j }  = a\left(f\right)_{\alpha ^j }\,, 
\end{equation}
\begin{equation}
\left(af + bg\right)_{\alpha ^j }  = a\left(f\right)_{\alpha ^j }  + b\left(g\right)_{\alpha ^j }\,,
\end{equation}
\makeatletter
\renewcommand\tagform@[1]{\maketag@@@{\ignorespaces#1\unskip\@@italiccorr}}
\begin{equation}\tag{R1}
\left(fg\right)_{\alpha ^2 }  = \left(f\right)_{\alpha ^0 } \left(g\right)_{\alpha ^2 }  + \left(f\right)_{\alpha ^2 } \left(g\right)_{\alpha ^0 }  + \left(f\right)_{\alpha } \left(g\right)_{\alpha }  + \left(f\right)_{\alpha ^2 } \left(g\right)_{\alpha ^2 }\,, 
\end{equation}
\begin{equation}\tag{R2}
\left(fg\right)_\alpha   = \left(f\right)_{\alpha ^0 } \left(g\right)_{\alpha }  + \left(f\right)_{\alpha } \left(g\right)_{\alpha ^0 }  + \left(f\right)_{\alpha ^2 } \left(g\right)_{\alpha }  + \left(f\right)_{\alpha } \left(g\right)_{\alpha ^2 }  + \left(f\right)_{\alpha ^2 } \left(g\right)_{\alpha ^2 } 
\end{equation}
and
\begin{equation}\tag{R3}
(fg)_{\alpha ^0 }  = (f)_{\alpha ^0 } (g)_{\alpha ^0 }  + (f)_{\alpha } (g)_{\alpha ^2 }  + (f)_{\alpha ^2 } (g)_{\alpha }\,. 
\end{equation}
\end{lemma}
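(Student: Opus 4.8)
The plan is to reduce everything to the single defining relation $\alpha^3=\alpha+1$ (identity \eqref{eq.i581x7l}) together with the uniqueness of the expansion $f=(f)_{\alpha^2}\alpha^2+(f)_{\alpha}\alpha+(f)_{\alpha^0}$ in rational coefficients, which is precisely the content of property P2 (the numbers $1,\alpha,\alpha^2$ being linearly independent over $\Q$). First I would dispose of the two linear rules: for rational $a,b$, the quantity $a\bigl((f)_{\alpha^2}\alpha^2+(f)_{\alpha}\alpha+(f)_{\alpha^0}\bigr)$ is a representation of $af$ in the basis $\{1,\alpha,\alpha^2\}$, so by P2 its coefficients are the components of $af$, giving $(af)_{\alpha^j}=a(f)_{\alpha^j}$; additivity follows the same way.

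For R1--R3 I would, writing $f_j:=(f)_{\alpha^j}$ and $g_j:=(g)_{\alpha^j}$ for brevity, first multiply $\alpha^3=\alpha+1$ by $\alpha$ to obtain $\alpha^4=\alpha^2+\alpha$, then expand
\[
fg=f_2g_2\,\alpha^4+(f_2g_1+f_1g_2)\,\alpha^3+(f_2g_0+f_1g_1+f_0g_2)\,\alpha^2+(f_1g_0+f_0g_1)\,\alpha+f_0g_0,
\]
substitute the two reduction formulas for $\alpha^4$ and $\alpha^3$, and collect powers of $\alpha$. This produces
\[
(fg)_{\alpha^2}=f_2g_2+f_0g_2+f_2g_0+f_1g_1,\qquad (fg)_{\alpha}=f_2g_2+f_2g_1+f_1g_2+f_1g_0+f_0g_1,
\]
\[
(fg)_{\alpha^0}=f_0g_0+f_1g_2+f_2g_1,
\]
and these are exactly R1, R2 and R3 once the abbreviations $(f)_{\alpha^j}$, $(g)_{\alpha^j}$ are restored; a last invocation of P2 certifies that the coefficients so obtained really are the components of $fg$.

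I do not expect a genuine obstacle here; the computation is routine. The only point worth stating explicitly is that the scheme presupposes that the functions in question admit a representation $(\,\cdot\,)_{\alpha^2}\alpha^2+(\,\cdot\,)_{\alpha}\alpha+(\,\cdot\,)_{\alpha^0}$ with rational components: for polynomials in $\alpha$ this follows by iterating $\alpha^3=\alpha+1$, and for the reciprocals $1/(d\alpha^2+e\alpha+f)$ it is supplied by properties P6--P8, so R1--R3 do cover all the functions $f,g$ met later in the paper. The same computation, verbatim with $\alpha$ replaced by $\beta$ or $\gamma$ (legitimate since those are also zeros of $x^3-x-1$), yields the analogous composition rules for the $\beta^j$ and $\gamma^j$ components.
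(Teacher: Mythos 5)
Your proof is correct; the paper states this lemma with no proof at all, treating it as routine, and your verification --- expanding $fg$ in the basis $\{1,\alpha,\alpha^2\}$, reducing via $\alpha^3=\alpha+1$ and $\alpha^4=\alpha^2+\alpha$, and appealing to P2 for uniqueness of the coefficients --- is exactly the computation the author implicitly relies on, with the collected coefficients matching R1, R2 and R3 term for term. Your closing caveat, that the rules presuppose $f$ and $g$ actually admit representations with rational components (guaranteed for the polynomials and reciprocals appearing in the paper by P1 and P6--P8), is a point the paper glosses over and is worth having on record.
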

\makeatother
\begin{theorem}
The following identity holds for integers $m$ and $n$:
\[
P_{m + n}  = P_m P_{n - 5}  + P_{m + 1} P_{n - 3}  + P_{m + 2} P_{n - 4}\,. 
\]

\end{theorem}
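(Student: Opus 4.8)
The plan is to read off both sides of the claimed identity as $\alpha^2$-components of powers of $\alpha$ and to pass from one to the other via the product rule R1 together with property P9. Concretely, put $f=\alpha^m$ and $g=\alpha^n$, so that $fg=\alpha^{m+n}$. By P9 we have $(f)_{\alpha^2}=P_{m-4}$, $(f)_{\alpha}=P_{m-3}$, $(f)_{\alpha^0}=P_{m-5}$, and the analogous formulas for $g$ with $m$ replaced by $n$.

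Next I would substitute these six numbers into the composition rule R1, giving
\[
\left(\alpha^{m+n}\right)_{\alpha^2}=P_{m-5}P_{n-4}+P_{m-4}P_{n-5}+P_{m-3}P_{n-3}+P_{m-4}P_{n-4}.
\]
Applying P9 once more to the left-hand side, $\left(\alpha^{m+n}\right)_{\alpha^2}=P_{m+n-4}$, so this is already the desired identity up to a shift. Replacing $m$ by $m+4$ (legitimate, since P9 and the recurrence hold for all integer indices) turns it into
\[
P_{m+n}=P_{m-1}P_{n-4}+P_{m}P_{n-5}+P_{m+1}P_{n-3}+P_{m}P_{n-4},
\]
and combining the first and last terms via the defining recurrence read at index $m+2$, namely $P_{m-1}+P_{m}=P_{m+2}$, collapses them to $P_{m+2}P_{n-4}$, yielding exactly the stated formula.

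The only real choice to get right — and what I would flag as the main (minor) obstacle — is to work with the $\alpha^2$-component rather than the $\alpha$-component: rule R1 produces just four summands, two of which merge through the Padovan recurrence into the single term $P_{m+2}P_{n-4}$, whereas rule R2 would leave five summands and a more awkward cleanup. Everything else is bookkeeping of index shifts, all of which are valid because the underlying identities (P9 and the three-term recurrence) are stated for all integers. A quick numerical spot check (e.g.\ $m=4$, $n=6$: $P_{10}=12$ and $P_4P_1+P_5P_3+P_6P_2=2+6+4=12$) confirms the normalization of the shifts.
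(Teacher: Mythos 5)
Your proof is correct and follows essentially the same route as the paper: both extract a single component of $\alpha^{m+n}=\alpha^{m}\alpha^{n}$ using P9 together with one of the composition rules. The only difference is that the paper applies R3 (the $\alpha^0$ component), which yields the three-term identity directly, whereas your choice of R1 produces four summands and needs the extra (correctly executed) step of merging two of them via the recurrence $P_{m+2}=P_{m}+P_{m-1}$.
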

\begin{proof}
Set $f=\alpha^m$ and $g=\alpha^n$ in R3, use P9 and note also that $\alpha^m\alpha^n=\alpha^{m+n}$.
\end{proof}
\section{Summation identities}
\subsection{Summation identities not involving binomial coefficients}
\subsubsection{Sums of Padovan and Perrin numbers with subscripts in arithmetic progression}
Setting $x=\alpha^p$ in the geometric sum identity
\begin{equation}\label{eq.cmyl19p}
\sum_{j=0}^nx^j=\frac{x^{n+1}-1}{x-1}
\end{equation}
and multiplying through by $\alpha^{q+4}$ gives
\begin{equation}\label{eq.fqua2xh}
\sum_{j=0}^n\alpha^{pj+q+4}=\frac{\alpha^{pn+p+q+4}-\alpha^{q+4}}{\alpha^p-1}\,.
\end{equation}
Thus, we have
\begin{equation}
\sum_{j = 0}^n {\left(\alpha ^{pj + q + 4} \right)_{\alpha^2}}  = \left(\frac{{(P_{pn + p + q}  - P_q )a^2  + (P_{pn + p + q + 1}  - P_{q + 1} )\alpha + P_{pn + p + q - 1}  - P_{q - 1} }}{{P_{p - 4} \alpha^2  + P_{p - 3} \alpha + P_{p - 5}  - 1}}\right)_{\alpha^2}\,;
\end{equation}
and hence, by P10,
\begin{equation}
\sum_{j = 0}^n {P_{pj + q} }  = \frac{{\left| {\begin{array}{*{20}c}
   {P_{pn + p + q}  - P_q } & {P_{p - 3} } & {P_{p - 4} }  \\
   {P_{pn + p + q + 1}  - P_{q + 1} } & {P_{p - 2}  - 1} & {P_{p - 3} }  \\
   {P_{pn + p + q - 1}  - P_{q - 1} } & {P_{p - 4} } & {P_{p - 5}  - 1}  \\
\end{array}} \right|}}{{\left| {\begin{array}{*{20}c}
   {P_{p - 2}  - 1} & {P_{p - 3} } & {P_{p - 4} }  \\
   {P_{p - 1} } & {P_{p - 2}  - 1} & {P_{p - 3} }  \\
   {P_{p - 3} } & {P_{p - 4} } & {P_{p - 5}  - 1}  \\
\end{array}} \right|}}\,.
\end{equation}
Using \eqref{eq.cmyl19p} and P1d, a similar calculation for the Perrin numbers yields
\begin{equation}
\sum_{j = 0}^n {Q_{pj + q} }  = \frac{{\left| {\begin{array}{*{20}c}
   {Q_{pn + p + q}  - Q_q } & {P_{p - 3} } & {P_{p - 4} }  \\
   {Q_{pn + p + q + 1}  - Q_{q + 1} } & {P_{p - 2}  - 1} & {P_{p - 3} }  \\
   {Q_{pn + p + q - 1}  - Q_{q - 1} } & {P_{p - 4} } & {P_{p - 5}  - 1}  \\
\end{array}} \right|}}{{\left| {\begin{array}{*{20}c}
   {P_{p - 2}  - 1} & {P_{p - 3} } & {P_{p - 4} }  \\
   {P_{p - 1} } & {P_{p - 2}  - 1} & {P_{p - 3} }  \\
   {P_{p - 3} } & {P_{p - 4} } & {P_{p - 5}  - 1}  \\
\end{array}} \right|}}\,.
\end{equation}
In particular we have
\begin{equation}
\sum_{j = 0}^n {P_{j + q} }  = P_{n + q + 5}  - P_{q + 4}
\end{equation}
and
\begin{equation}
\sum_{j = 0}^n {Q_{j + q} }  = Q_{n + q + 5}  - Q_{q + 4}\,.
\end{equation}

\subsubsection{Generating functions of Padovan and Perrin numbers with indices in arithmetic progression}
Setting $x=y\alpha^p$ in the identity
\begin{equation}
\sum_{j = 0}^\infty  {x^j }  = \frac{1}{{1 - x}}
\end{equation}
and multiplying through by $\alpha^{q+4}$ gives
\begin{equation}
\sum_{j = 0}^\infty  {\alpha ^{pj + q + 4} y^j }  = \frac{{\alpha ^{q+4} }}{{1 - \alpha ^p y}}  = \frac{{P_q \alpha ^2  + P_{q + 1} \alpha  + P_{q - 1} }}{{ - yP_{p - 4} \alpha ^2  - yP_{p - 3} \alpha  - yP_{p - 5}  + 1}}\,.
\end{equation}
Thus, by property P10, we have
\begin{equation}\label{eq.pqhgv3t}
\sum_{j = 0}^\infty  {P_{pj + q} y^j }  = \frac{{\left| {\begin{array}{*{20}c}
   {P_q } & { - yP_{p - 3} } & { - yP_{p - 4} }  \\
   {P_{q + 1} } & { - yP_{p - 2}  + 1} & { - yP_{p - 3} }  \\
   {P_{q - 1} } & { - yP_{p - 4} } & { - yP_{p - 5}  + 1}  \\
\end{array}} \right|}}{{\left| {\begin{array}{*{20}c}
   { - yP_{p - 2}  + 1} & { - yP_{p - 3} } & { - yP_{p - 4} }  \\
   { - yP_{p - 1} } & { - yP_{p - 2}  + 1} & { - yP_{p - 3} }  \\
   { - yP_{p - 3} } & { - yP_{p - 4} } & { - yP_{p - 5}  + 1}  \\
\end{array}} \right|}}\,.
\end{equation}
Similarly,
\begin{equation}\label{eq.e90lqpl}
\sum_{j = 0}^\infty  {Q_{pj + q} y^j }  = \frac{{\left| {\begin{array}{*{20}c}
   {Q_q } & { - yP_{p - 3} } & { - yP_{p - 4} }  \\
   {Q_{q + 1} } & { - yP_{p - 2}  + 1} & { - yP_{p - 3} }  \\
   {Q_{q - 1} } & { - yP_{p - 4} } & { - yP_{p - 5}  + 1}  \\
\end{array}} \right|}}{{\left| {\begin{array}{*{20}c}
   { - yP_{p - 2}  + 1} & { - yP_{p - 3} } & { - yP_{p - 4} }  \\
   { - yP_{p - 1} } & { - yP_{p - 2}  + 1} & { - yP_{p - 3} }  \\
   { - yP_{p - 3} } & { - yP_{p - 4} } & { - yP_{p - 5}  + 1}  \\
\end{array}} \right|}}\,.
\end{equation}
\subsubsection{Exponential generating functions of Padovan and Perrin numbers with indices in arithmetic progression}
\begin{theorem}
The following identities hold for integers $p$ and $q$:
\begin{equation}
\sum_{j = 0}^\infty  {\frac{P_{pj + q}}{j!} y^j }  = \frac{{ - i}}{{\sqrt {23} }}\left| {\begin{array}{*{20}c}
   {\alpha ^{q + 4} e^{\alpha ^p y} } & \alpha  & 1  \\
   {\beta ^{q + 4} e^{\beta ^p y} } & \beta  & 1  \\
   {\gamma ^{q + 4} e^{\gamma ^p y} } & \gamma  & 1  \\
\end{array}} \right|
\end{equation}
and
\begin{equation}\label{eq.e6685ns}
\sum_{j = 0}^\infty  {\frac{{Q_{pj + q} }}{{j!}}y^j }  = \frac{{ - i}}{{\sqrt {23} }}\left| {\begin{array}{*{20}c}
   {(2\alpha ^{q + 2}  + \alpha ^{q - 1} )e^{\alpha ^p y} } & \alpha  & 1  \\
   {(2\beta ^{q + 2}  + \beta ^{q - 1} )e^{\beta ^p y} } & \beta  & 1  \\
   {(2\gamma ^{q + 2}  + \gamma ^{q - 1} )e^{\gamma ^p y} } & \gamma  & 1  \\
\end{array}} \right|\,.
\end{equation}
\end{theorem}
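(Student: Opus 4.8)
The plan is to turn the three identities of Lemma~\ref{lem.mjwk6zf} (respectively Lemma~\ref{lem.pqsiybc}) into a $3\times 3$ linear system for exponential generating functions and to solve it by Cramer's rule, exactly as the geometric-sum arguments of the previous subsections do, but with $e^{x}$ in place of $(1-x)^{-1}$.

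For the Padovan formula, replace $n$ by $pj+q+4$ in (P1), (P1a), (P1c); this gives, for every integer $j\ge 0$,
\[
\alpha^{\,pj+q+4}=P_{pj+q}\,\alpha^2+P_{pj+q+1}\,\alpha+P_{pj+q-1},
\]
and the same with $\beta$ or $\gamma$ in place of $\alpha$. Multiplying by $y^j/j!$, summing over $j\ge 0$, and using that $\sum_{j\ge 0}(\alpha^p y)^j/j!=e^{\alpha^p y}$ converges for all $y$, we obtain, with $S_r:=\sum_{j\ge 0}P_{pj+q+r}\,y^j/j!$,
\[
\begin{pmatrix}\alpha^2&\alpha&1\\ \beta^2&\beta&1\\ \gamma^2&\gamma&1\end{pmatrix}\begin{pmatrix}S_0\\ S_1\\ S_{-1}\end{pmatrix}=\begin{pmatrix}\alpha^{\,q+4}e^{\alpha^p y}\\ \beta^{\,q+4}e^{\beta^p y}\\ \gamma^{\,q+4}e^{\gamma^p y}\end{pmatrix}.
\]
Cramer's rule for the first unknown writes $\sum_{j\ge 0}P_{pj+q}y^j/j!$ as the determinant displayed in the statement divided by the Vandermonde-type determinant $\Delta$ of the coefficient matrix.

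It remains only to evaluate $\Delta$. Up to a permutation of its columns, $\Delta=\pm(\alpha-\beta)(\beta-\gamma)(\gamma-\alpha)$, so $\Delta^2$ is the discriminant of the characteristic polynomial $x^3-x-1$, namely $-4(-1)^3-27(-1)^2=-23$; hence $\Delta=\pm i\sqrt{23}$, and with the labelling of the roots fixed so that $\Delta=i\sqrt{23}$ one has $1/\Delta=-i/\sqrt{23}$, which is precisely the prefactor in the statement. Pinning down this value — equivalently, deciding which square root of $-23$ occurs — is the only delicate point; everything else is routine bookkeeping.

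The Perrin formula follows by the same argument applied to Lemma~\ref{lem.pqsiybc} with $n$ replaced by $pj+q$: the right-hand sides become $Q_{pj+q}\,\alpha^2+Q_{pj+q+1}\,\alpha+Q_{pj+q-1}$ and its $\beta,\gamma$ analogues, so the coefficient matrix and hence $\Delta=i\sqrt{23}$ are unchanged, while the left-hand sides $2\alpha^{\,pj+q+2}+\alpha^{\,pj+q-1}$, after multiplication by $y^j/j!$ and summation, yield $(2\alpha^{q+2}+\alpha^{q-1})e^{\alpha^p y}$. Cramer's rule then delivers \eqref{eq.e6685ns}.
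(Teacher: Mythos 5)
Your proposal is correct and follows essentially the same route as the paper: write $\alpha^{q+4}e^{\alpha^p y}$ (resp.\ $(2\alpha^{q+2}+\alpha^{q-1})e^{\alpha^p y}$) as $\alpha^2 A+\alpha B+C$ via P1/P1d, do the same for $\beta,\gamma$, and solve the resulting $3\times3$ system by Cramer's rule with $\Delta=i\sqrt{23}$. The only difference is that you justify the value of $\Delta$ through the discriminant of $x^3-x-1$ (and flag the sign/labelling convention), a point the paper simply asserts.
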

\begin{proof}
By Taylor series expansion and P1 we have
\[
\begin{split}
\alpha ^{q + 4} e^{\alpha ^p y}  &= \sum_{j = 0}^\infty  {\frac{{\alpha ^{pj + q + 4} y^j}}{{j!}}}\\
&= \alpha ^2 \sum_{j = 0}^\infty  {\frac{{P_{pj + q} y^j }}{{j!}}}  + \alpha \sum_{j = 0}^\infty  {\frac{{P_{pj + q + 1} y^j }}{{j!}} + \sum_{j = 0}^\infty  {\frac{{P_{pj + q - 1} y^j }}{{j!}}} }\,.
\end{split}
\]
Thus,
\begin{equation}\label{eq.b4nir1k}
\alpha ^{q + 4} e^{\alpha ^p y}  = \alpha ^2 A(p,q) + \alpha B(p,q) + C(p,q)\,,
\end{equation}
where
\begin{equation}
A(p,q) = \sum_{j = 0}^\infty  {\frac{{P_{pj + q} y^j }}{{j!}}}
\end{equation}
and
\[
B(p,q) = \sum_{j = 0}^\infty  {\frac{{P_{pj + q + 1} y^j }}{{j!}},\quad C(p,q) = \sum_{j = 0}^\infty  {\frac{{P_{pj + q - 1} y^j }}{{j!}}} }\,.
\]
Similar calculations give
\begin{equation}\label{eq.o9xh4ao}
\beta ^{q + 4} e^{\beta ^p y}  = \beta ^2 A(p,q) + \beta B(p,q) + C(p,q)
\end{equation}
and
\begin{equation}\label{eq.sn0fnoc}
\gamma ^{q + 4} e^{\gamma ^p y}  = \gamma ^2 A(p,q) + \gamma B(p,q) + C(p,q)\,.
\end{equation}
Solving \eqref{eq.b4nir1k}, \eqref{eq.o9xh4ao} and \eqref{eq.sn0fnoc} simultaneously for $A$, $B$ and $C$, Crammer's rule gives
\[
A(p,q)=\Delta_A/\Delta\,,
\]
where
\[
\Delta=\left| {\begin{array}{*{20}c}
   \alpha^2 & \alpha  & 1  \\
   \beta^2 & \beta  & 1  \\
   \gamma^2 & \gamma  & 1  \\
\end{array}} \right|=i\sqrt {23}
\]
and
\[
\Delta_A=\left| {\begin{array}{*{20}c}
   {\alpha ^{q + 4} e^{\alpha ^p y} } & \alpha  & 1  \\
   {\beta ^{q + 4} e^{\beta ^p y} } & \beta  & 1  \\
   {\gamma ^{q + 4} e^{\gamma ^p y} } & \gamma  & 1  \\
\end{array}} \right|\,.
\]
The proof of \eqref{eq.e6685ns} is similar. We start with
\[
(2\alpha ^{q + 2}  + \alpha ^{q + 2} )e^{\alpha ^p y}  = \sum_{j = 0}^\infty  {(2\alpha ^{pj + q + 2}  + \alpha ^{pj + q - 1} )\frac{{y^j }}{{j!}}} 
\]
and make use of property P1d.
\end{proof}
\subsubsection{Weighted Padovan and Perrin summation}
Replacing $x$ with $x/y$ in identity \eqref{eq.cmyl19p} gives
\begin{equation}\label{eq.bdoy5wn}
(x - y)\sum_{j = 0}^n {y^{r - j} x^j }  = y^{r - n} x^{n + 1}  - y^{r + 1} \,,
\end{equation}
for integers $r$ and $n$ and arbitrary $x$ and $y$.
\begin{theorem}
The following identities hold for integers $m$, $n$ and $r$, for each set of values of $a$, $b$, $c$, $d$ and $e$ given in the table:
\begin{equation}
\begin{split}
&bf^{n + 1}\sum_{j = 0}^n {a^{n - j} P_{m + d + (m + c)r - 4 + (e - c)j} }\\
&\qquad  =f^{n + 1} P_{(m + c)r + (e - c)n + m + e - 4}  - a^{n + 1} P_{(m + c)(r + 1) - 4}
\end{split}
\end{equation}
and
\begin{equation}
\begin{split}
&bf^{n + 1}\sum_{j = 0}^n {a^{n - j} Q_{m + d + (m + c)r - 4 + (e - c)j} }\\
&\qquad  = f^{n + 1} Q_{(m + c)r + (e - c)n + m + e - 4}  - a^{n + 1} Q_{(m + c)(r + 1) - 4}\,.
\end{split}
\end{equation}
\end{theorem}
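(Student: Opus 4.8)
The plan is to feed the three‑term relation \eqref{eq.zan2cs6} into the weighted geometric sum \eqref{eq.bdoy5wn} and then pass from powers of $\alpha$ to Padovan (resp.\ Perrin) numbers by extracting $\alpha^2$‑components, exactly as in the derivation of the arithmetic‑progression sums earlier in the paper.

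Concretely, in \eqref{eq.bdoy5wn} I would set $x=f\alpha^{m+e}$ and $y=a\alpha^{m+c}$. Then \eqref{eq.zan2cs6} gives $x-y=f\alpha^{m+e}-a\alpha^{m+c}=b\alpha^{m+d}$, so the left side of \eqref{eq.bdoy5wn} becomes $b\alpha^{m+d}\sum_{j=0}^{n}(a\alpha^{m+c})^{r-j}(f\alpha^{m+e})^{j}$; expanding the monomials, the $\alpha$‑exponent of the $j$‑th summand collapses to $m+d+(m+c)r+(e-c)j$, while the right side of \eqref{eq.bdoy5wn} becomes $a^{\,r-n}f^{\,n+1}\alpha^{(m+c)r+(e-c)n+m+e}-a^{\,r+1}\alpha^{(m+c)(r+1)}$. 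Dividing through by $a^{\,r-n}$ — legitimate because $a\neq0$ for every set in the table — leaves a pure identity among integer powers of $\alpha$ with rational coefficients, which is the theorem at the level of $\alpha$.

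It then remains to apply the $\alpha^2$‑component map, which is linear by property P2 and sends $\alpha^{N}\mapsto P_{N-4}$ by property P9; applied termwise it converts the monomial identity into the asserted Padovan identity, the uniform shift by $4$ in the subscripts being precisely the shift in P9. The Perrin identity follows by the identical computation with property P1d (Lemma \ref{lem.pqsiybc}) in the role of P9 — one multiplies the monomial identity by $2\alpha^{-2}+\alpha^{-5}$ before extracting $\alpha^2$‑components, so that the termwise rule becomes $\alpha^{N}\mapsto Q_{N-4}$ (equivalently, one adds the $\alpha$‑, $\beta$‑ and $\gamma$‑versions of the monomial identity, each multiplied by $\lambda^{-4}$, and invokes $Q_N=\alpha^N+\beta^N+\gamma^N$ from \eqref{eq.m0eml7d}). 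I do not anticipate a conceptual obstacle; the one place care is needed is the clerical bookkeeping — keeping track of the powers of $a$ and $f$ across the normalisation by $a^{\,r-n}$, and checking that the three $\alpha$‑exponents reduce, after the shift by $4$, to the subscripts displayed in the statement.
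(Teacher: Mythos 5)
Your proposal is correct and is exactly the paper's (one-line) proof fleshed out: the paper likewise substitutes $x=f\alpha^{m+e}$, $y=a\alpha^{m+c}$ into \eqref{eq.bdoy5wn}, uses \eqref{eq.zan2cs6} to write $x-y=b\alpha^{m+d}$, and reads off Padovan/Perrin values via P9 and P1d. One remark on the bookkeeping you rightly flag: carrying the powers through the normalisation by $a^{r-n}$ actually leaves a factor $f^{j}$ inside the sum rather than $f^{n+1}$ outside (the two agree only when $f=1$, which holds for all of the paper's worked examples), so your derivation in fact exposes a typo in the displayed statement rather than any gap in your argument.
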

\begin{proof}
With identity \eqref{eq.zan2cs6} in mind, set $x=f\alpha^{m+e}$ and $y=a\alpha^{m+c}$ in identity \eqref{eq.bdoy5wn}.
\end{proof}
In particular, we have
\begin{equation}
\begin{split}
&bf^{n + 1}\sum_{j = 0}^n {a^{n - j} P_{m + d + (m + c)n - 4 + (e - c)j} }\\
&\qquad  =f^{n + 1} P_{(m+e)(n+1) - 4}  - a^{n + 1} P_{(m + c)(n + 1) - 4}
\end{split}
\end{equation}
and
\begin{equation}
\begin{split}
&bf^{n + 1}\sum_{j = 0}^n {a^{n - j} Q_{m + d + (m + c)n - 4 + (e - c)j} }\\
&\qquad  =f^{n + 1} Q_{(m+e)(n+1) - 4}  - a^{n + 1} Q_{(m + c)(n + 1) - 4}\,.
\end{split}
\end{equation}Here are explicit examples, with the indicated set of values of $a$, $b$, $c$, $d$, $e$ and $f$ as read from the table.
\begin{equation}
\mbox{Set 1: }\sum_{j = 0}^n {P_{m + (m + 1)r - 5 - 3j} }  = P_{(m + 1)(r + 1) - 4}  - P_{(m + 1)r - 3n + m - 6}\,,
\end{equation}
\begin{equation}
\sum_{j = 0}^n {Q_{m + (m + 1)r - 5 - 3j} }  = Q_{(m + 1)(r + 1) - 4}  - Q_{(m + 1)r - 3n + m - 6}\,,
\end{equation}
\begin{equation}
\mbox{Set 7: }\sum_{j = 0}^n {2^{n - j} P_{m + (m - 1)r - 3 - 5j} }  = 2^{n + 1} P_{(m - 1)(r + 1) - 4}  - P_{(m - 1)r - 5n + m - 10}\,,
\end{equation}
\begin{equation}
\sum_{j = 0}^n {2^{n - j} Q_{m + (m - 1)r - 3 - 5j} }  = 2^{n + 1} Q_{(m - 1)(r + 1) - 4}  - Q_{(m - 1)r - 5n + m - 10}\,,
\end{equation}
\begin{equation}
\mbox{Set 10: }\sum_{j = 0}^n {2^{n - j} P_{m + (m + 2)r - 6 + 3j} }  = P_{(m + 2)r + 3n + m + 1}  - 2^{n + 1} P_{(m + 2)(r + 1) - 4}\,,
\end{equation}
\begin{equation}
\sum_{j = 0}^n {2^{n - j} Q_{m + (m + 2)r - 6 + 3j} }  = Q_{(m + 2)r + 3n + m + 1}  - 2^{n + 1} Q_{(m + 2)(r + 1) - 4}\,.
\end{equation}
Further summation identities can be obtained from the following identities:
\begin{equation}\label{eq.sp0lyz6}
x\sum_{j = 0}^n {y^{r - j} (x + y)^j }  = y^{r - n} (x + y)^{n + 1}  - y^{r + 1}
\end{equation}
and
\begin{equation}\label{eq.k66s9zf}
(x - y)\sum_{j = 0}^n {x^{r - j} y^j }  = x^{r + 1} - x^{r - n} y^{n + 1} \,.
\end{equation}
Identity \eqref{eq.sp0lyz6} is obtained by replacing $x$ with $x+y$ in identity \eqref{eq.bdoy5wn} while identity \eqref{eq.k66s9zf} is obtained by interchanging $x$ and $y$ in identity \eqref{eq.bdoy5wn}.

\subsubsection{Sums of certain products of Padovan and Perrin numbers}
Since
\[
\sum_{j = 0}^n {y^{n - j} x^j }=\sum_{j = 0}^n {y^{j} x^{n - j} }\,,
\]
identity \eqref{eq.bdoy5wn} implies
\begin{equation}\label{eq.jes4tt5}
\frac{1}{2}\sum_{j = 0}^n {(xy)^j (x^{n - 2j}  + y^{n - 2j} )}  = \frac{{x^{n + 1}  - y^{n + 1} }}{{x - y}}\,.
\end{equation}
\begin{theorem}
The following identities hold for integers $p$, $q$ and $n$:
\begin{equation}
\begin{split}
\sum_{j = 0}^n {P_{q - pj} Q_{p(n - 2j)} }  &= \frac{{\left| {\begin{array}{*{20}c}
   {P_{pn + 3p + q}  - P_{q - 2pn} } & {P_{3p - 3} } & {P_{3p - 4} }  \\
   {P_{pn + 3p + q + 1}  - P_{q - 2pn + 1} } & {P_{3p - 2}  - 1} & {P_{3p - 3} }  \\
   {P_{pn + 3p + q - 1}  - P_{q - 2pn - 1} } & {P_{3p - 4} } & {P_{3p - 5}  - 1}  \\
\end{array}} \right|}}{{\left| {\begin{array}{*{20}c}
   {P_{3p - 2}  - 1} & {P_{3p - 3} } & {P_{3p - 4} }  \\
   {P_{3p - 1} } & {P_{3p - 2}  - 1} & {P_{3p - 3} }  \\
   {P_{3p - 3} } & {P_{3p - 4} } & {P_{3p - 5}  - 1}  \\
\end{array}} \right|}}\\
&\qquad\qquad +\frac{{2\left| {\begin{array}{*{20}c}
   {P_{q + 1} P_{pn + p - 4}  - P_q P_{pn + p - 3} } & {P_{p - 4} } & 0  \\
   {P_{q + 2} P_{pn + p - 4}  - P_{q + 1} P_{pn + p - 3} } & { - P_{p - 3} } & {P_{p - 4} }  \\
   {P_q P_{pn + p - 4}  - P_{q - 1} P_{pn + p - 3} } & 0 & { - P_{p - 3} }  \\
\end{array}} \right|}}{{\left| {\begin{array}{*{20}c}
   { - P_{p - 3} } & {P_{p - 4} } & 0  \\
   {P_{p - 4} } & { - P_{p - 3} } & {P_{p - 4} }  \\
   {P_{p - 4} } & 0 & { - P_{p - 3} }  \\
\end{array}} \right|}}\,.
\end{split}
\end{equation}
\end{theorem}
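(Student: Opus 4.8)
The plan is to specialize identity~\eqref{eq.jes4tt5} to $x=\alpha^p$, $y=\beta^p$ and then extract $\gamma^2$-components. By Vieta's formulas~\eqref{eq.nj4qi47} we have $(\alpha^p\beta^p)^j=(\alpha\beta)^{pj}=\gamma^{-pj}$, so \eqref{eq.jes4tt5} reads
\[
\frac12\sum_{j=0}^n\gamma^{-pj}\bigl(\alpha^{p(n-2j)}+\beta^{p(n-2j)}\bigr)=\frac{\alpha^{p(n+1)}-\beta^{p(n+1)}}{\alpha^p-\beta^p}\,.
\]
Using \eqref{eq.gtb313p} in the form $\alpha^{p(n-2j)}+\beta^{p(n-2j)}=Q_{p(n-2j)}-\gamma^{p(n-2j)}$, multiplying through by $\gamma^{q+4}$, and separating the two resulting sums gives
\[
\frac12\sum_{j=0}^n\gamma^{q+4-pj}Q_{p(n-2j)}-\frac12\sum_{j=0}^n\gamma^{q+4+pn-3pj}=\frac{\alpha^{p(n+1)}-\beta^{p(n+1)}}{\alpha^p-\beta^p}\gamma^{q+4}\,.
\]

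Next I would apply the $\gamma^2$-component map to both sides. On the left the $Q_{p(n-2j)}$ are rational scalars and pass through the component map; the $\gamma$-version of property P9 gives $\bigl(\gamma^{q+4-pj}\bigr)_{\gamma^2}=P_{q-pj}$ and $\bigl(\gamma^{q+4+pn-3pj}\bigr)_{\gamma^2}=P_{q+pn-3pj}$, so the left side becomes $\tfrac12\sum_{j=0}^nP_{q-pj}Q_{p(n-2j)}-\tfrac12\sum_{j=0}^nP_{q+pn-3pj}$. The right side is precisely the quantity $\bigl(\frac{\alpha^r-\beta^r}{\alpha^s-\beta^s}\gamma^t\bigr)_{\gamma^2}$ appearing in Lemma~\ref{lem.ytluwst} with $r=p(n+1)$, $s=p$ and $t=q+4$, hence by~\eqref{eq.df8mpe3} it equals the second determinant ratio displayed in the statement (the factor $2$ there materializes when the whole component identity is multiplied through by $2$ at the end).

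The one remaining task is to evaluate $\sum_{j=0}^nP_{q+pn-3pj}$. Reindexing by $k=n-j$ rewrites it as $\sum_{k=0}^nP_{3pk+(q-2pn)}$, a sum of Padovan numbers with subscripts in arithmetic progression of common difference $3p$; the arithmetic-progression sum formula already derived in this subsection, with $p$ replaced by $3p$ and $q$ replaced by $q-2pn$ (and using $P_{3pn+3p+(q-2pn)}=P_{pn+3p+q}$), reproduces exactly the first determinant ratio in the statement. Multiplying the component identity through by $2$ and transposing the $\sum P_{q+pn-3pj}$ term then yields the claimed formula. I expect no genuine obstacle beyond careful bookkeeping of subscripts in the two specializations; the only hypothesis to keep in view is $p\ne0$, needed so that $s=p\ne0$ in Lemma~\ref{lem.ytluwst} and so that the $3p$-progression formula applies.
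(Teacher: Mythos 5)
Your proposal is correct and follows essentially the same route as the paper, which simply sets $x=\alpha^p$, $y=\beta^p$ in \eqref{eq.jes4tt5} and invokes Lemma \ref{lem.ytluwst}; you merely unpack the first part of that lemma via \eqref{eq.gtb313p} and P9 rather than citing \eqref{eq.jkdhfys} directly. Your explicit handling of the leftover sum $\sum_j P_{q+pn-3pj}$ through the arithmetic-progression formula (with $p\mapsto 3p$, $q\mapsto q-2pn$), and the caveat $p\neq 0$, supply details the paper leaves implicit but introduce no new method.
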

\begin{proof}
Set $x=\alpha^p$, $y=\beta^p$ in identity \eqref{eq.jes4tt5} and make use of Lemma \ref{lem.ytluwst}.
\end{proof}
A particular case is
\begin{equation}
\sum_{j=0}^n{P_{q-j}Q_{n-2j}}=P_{n+q+2}-P_{q-2n-1}-2(P_{q+1}P_{n-3}-P_qP_{n-2})\,.
\end{equation}
\subsection{Binomial summation identities}
\subsubsection{Identities from the binomial formula}
With identity \eqref{eq.zan2cs6} in mind; substitute $x=a\alpha^{m+c}$ and $y=b\alpha^{m+d}$ in the binomial formula
\begin{equation}\label{eq.g81atqv}
\sum_{j = 0}^n {\binom njx^j y^{n - j}}  = (x + y)^n\,,
\end{equation}
multiply through by $\alpha^{p+4}$ and make use of properties P1 and P9 to obtain
\begin{theorem}\label{thm.gpl9oyr}
The identity 
\[
\sum_{j=0}^n\binom nja^jb^{n-j}P_{(m+d)n+p+(c-d)j}=f^nP_{(m+e)n+p}\,,
\]
holds for non-negative integer $n$, arbitrary integers $m$ and $p$, and values of $a$, $b$, $c$, $d$ and $e$ as given in the table.
\end{theorem}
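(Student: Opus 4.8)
The plan is to read the claimed identity off as the $\alpha^2$-component of a single closed-form identity in $\alpha$. First I would take the binomial formula \eqref{eq.g81atqv} and substitute $x=a\alpha^{m+c}$, $y=b\alpha^{m+d}$, where $(a,b,c,d,e,f)$ is any one of the fifteen sets tabulated alongside identity \eqref{eq.zan2cs6}. On the left the general term becomes $\binom nj a^j b^{n-j}\alpha^{(m+c)j+(m+d)(n-j)}=\binom nj a^j b^{n-j}\alpha^{(m+d)n+(c-d)j}$, while on the right, since identity \eqref{eq.zan2cs6} asserts $a\alpha^{m+c}+b\alpha^{m+d}=f\alpha^{m+e}$, we get $(x+y)^n=f^n\alpha^{(m+e)n}$. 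Multiplying both sides by $\alpha^{p+4}$ then gives
\[
\sum_{j=0}^n\binom nj a^j b^{n-j}\,\alpha^{(m+d)n+p+4+(c-d)j}=f^n\alpha^{(m+e)n+p+4}.
\]

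Next I would extract the $\alpha^2$-component of each side. The coefficients $\binom nj a^j b^{n-j}$ and $f^n$ are integers, hence rational, so the map $(\cdot)_{\alpha^2}$ is additive and pulls these scalars out — this is immediate from property P2, equivalently from the composition rules R1--R3 — and therefore it passes through the finite sum. Applying property P9 in the form $(\alpha^k)_{\alpha^2}=P_{k-4}$ term by term turns $\alpha^{(m+d)n+p+4+(c-d)j}$ into $P_{(m+d)n+p+(c-d)j}$ and $\alpha^{(m+e)n+p+4}$ into $P_{(m+e)n+p}$, which is precisely the asserted identity. The analogous computation with $\beta$ or $\gamma$ in place of $\alpha$ yields nothing new, since P9 has identical $\beta$- and $\gamma$-versions (Lemma \ref{lem.mjwk6zf}).

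I do not expect a genuine obstacle here; the only point requiring a word of care is that P1, P9 and \eqref{eq.zan2cs6} must be valid for \emph{all} integer exponents, since $(m+d)n+p+(c-d)j$ and $(m+e)n+p$ can be negative for the tabulated parameters and for negative $m$ or $p$. This is already guaranteed, those identities having been established for every integer, and $\alpha,\beta,\gamma\ne 0$ makes the negative powers meaningful. It is also worth remarking why one multiplies by $\alpha^{p+4}$ and then reads off the $\alpha^2$-component rather than some other pairing: this is the combination for which the $-4$ offset in P9 cancels the $+4$ in the shift, so that $p$ appears undistorted in every subscript; multiplying instead by $\alpha^{p}$ and taking the $\alpha^0$-component would work equally well but leave a uniform $-5$ in the subscripts.
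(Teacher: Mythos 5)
Your proposal is correct and is essentially the paper's own proof: the paper likewise substitutes $x=a\alpha^{m+c}$, $y=b\alpha^{m+d}$ into the binomial formula \eqref{eq.g81atqv}, multiplies through by $\alpha^{p+4}$, and reads off the result via P1 and P9 together with \eqref{eq.zan2cs6}. Your additional remarks on the validity of P9 for negative exponents and on the role of the $+4$ shift are accurate but only make explicit what the paper leaves implicit.
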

The corresponding Perrin version of the identity of Theorem \ref{thm.gpl9oyr} is
\begin{equation}
\sum_{j=0}^n\binom nja^jb^{n-j}Q_{(m+d)n+p+(c-d)j}=f^nQ_{(m+e)n+p}\,.
\end{equation}
Here are some explicit examples from Theorem \ref{thm.gpl9oyr} and sets of values from table
\begin{equation}
\text{Set 1: }\sum_{j = o}^n {( - 1)^j \binom njP_{(m - 1)n + p + 2j} }  = ( - 1)^n P_{(m - 2)n + p} \,,
\end{equation}
\begin{equation}
\sum_{j = o}^n {( - 1)^j \binom njQ_{(m - 1)n + p + 2j} }  = ( - 1)^n Q_{(m - 2)n + p} \,,
\end{equation}
\begin{equation}
\text{Set 4: }\sum_{j = o}^n {\binom njP_{(m - 2)n + p + 4j} }  = P_{(m + 3)n + p} \,,
\end{equation}
\begin{equation}
\sum_{j = o}^n {\binom njQ_{(m - 2)n + p + 4j} }  = Q_{(m + 3)n + p} \,,
\end{equation}
\begin{equation}
\text{Set 7: }\sum_{j = o}^n {( - 1)^j \binom nj2^j P_{(m + 1)n + p - 2j} }  = ( - 1)^n P_{(m - 6)n + p} \,,
\end{equation}
\begin{equation}
\sum_{j = o}^n {( - 1)^j \binom nj2^j Q_{(m + 1)n + p - 2j} }  = ( - 1)^n Q_{(m - 6)n + p} \,,
\end{equation}
\begin{equation}
\text{Set 10: }\sum_{j = o}^n {2^j \binom njP_{(m - 2)n + p + 4j} }  = P_{(m + 5)n + p} \,,
\end{equation}
\begin{equation}
\sum_{j = o}^n {2^j \binom njQ_{(m - 2)n + p + 4j} }  = Q_{(m + 5)n + p} \,,
\end{equation}
\begin{equation}
\text{Set 13: }\sum_{j = o}^n {( - 1)^j \binom njP_{(m - 7)n + p + 14j} }  = ( - 1)^n 4^n P_{(m + 2)n + p} \,,
\end{equation}
\begin{equation}
\sum_{j = o}^n {( - 1)^j \binom njQ_{(m - 7)n + p + 14j} }  = ( - 1)^n 4^n Q_{(m + 2)n + p} \,.
\end{equation}
Many more binomial identities can be derived by making appropriate substitutions in the following variations on the binomial formula:
\begin{equation}\label{eq.xmac84j}
\sum_{j = 0}^n {( - 1)^j \binom nj(x + y)^j y^{n - j}}  = ( - 1)^n x^n\,,
\end{equation}
\begin{equation}\label{eq.y8wpfl4}
\sum_{j = 0}^n {( - 1)^j \binom njx^j (x + y)^{n - j} }  = y^n\,,
\end{equation}
\begin{equation}\label{eq.gt826zn}
\sum_{j = 0}^n {\binom njjx^{j - 1} y^{n - j}}  = n(x + y)^{n - 1}\,,
\end{equation}
\begin{equation}\label{eq.r7394o5}
\sum_{j = 0}^n {( - 1)^j \binom njj(x + y)^{j - 1} y^{n - j}}  = ( - 1)^n nx^{n - 1}
\end{equation}
and
\begin{equation}\label{eq.c0t71uc}
\sum_{j = 1}^n {( - 1)^{j - 1} \binom njx^{j - 1} j(x + y)^{n - j} }  = ny^{n - 1}\,.
\end{equation}
Note that identities \eqref{eq.xmac84j} and \eqref{eq.y8wpfl4} are obtained from identity \eqref{eq.g81atqv} by obvious transformations while identity \eqref{eq.gt826zn} is obtained by differentiating the identity 
\begin{equation}\label{eq.bx3ymai}
\sum_{j = 0}^n {\binom njx^j e^{jz} y^{n - j}}  = (xe^z + y)^n 
\end{equation}
with respect to $z$ and then setting $z$ to zero. 
More generally,
\begin{equation}
\sum_{j = 0}^n {\binom njj^rx^jy^{n - j}}  = \left. {\frac{{d^r }}{{dz^r }}(xe^z + y )^n } \right|_{z = 0}\,.
\end{equation}
Identities \eqref{eq.r7394o5} and \eqref{eq.c0t71uc} are variations on identity \eqref{eq.gt826zn}.
\subsubsection{Identities from Waring identity}
Waring's formula and its dual \cite[Equations (22) and (1)]{gould99} are
\begin{equation}\label{eq.h35j76y}
\sum_{j = 0}^{\left\lfloor {n/2} \right\rfloor } {( - 1)^j \frac{n}{{n - j}}\binom {n-j}j(xy)^j (x + y)^{n - 2j} }  = x^{n}  + y^{n}
\end{equation}
and
\begin{equation}\label{eq.amsa61r}
\sum_{j = 0}^{\left\lfloor {n/2} \right\rfloor } {( - 1)^j \binom {n-j}j(xy)^j (x + y)^{n - 2j} }  = \frac{x^{n + 1}  - y^{n + 1}}{x - y}\,.
\end{equation}
Identity \eqref{eq.h35j76y} holds for positive integer $n$ while identity \eqref{eq.amsa61r} holds for any non-negative integer $n$.
\begin{theorem}\label{thm.dgtxbtl}
The following identities hold for positive integer $n$ and arbitrary integer $p$:
\begin{equation}\label{eq.wnoinfi}
\sum_{j = 0}^{\left\lfloor {n/2} \right\rfloor } {( - 1)^j \frac{n}{{n - j}}\binom {n-j}jP_{p + n - 3j} }  = ( - 1)^n (Q_nP_p-P_{n+p} )\,,
\end{equation}
\begin{equation}\label{eq.tmss2th}
\sum_{j = 0}^{\left\lfloor {n/2} \right\rfloor } {( - 1)^j \binom {n-j}jP_{p + n - 3j} }  = ( - 1)^{n - 1}(P_{p+1}P_{n-3}-P_pP_{n-2}) \,,
\end{equation}
\begin{equation}\label{eq.tnxz92f}
\sum_{j = 0}^{\left\lfloor {n/2} \right\rfloor } {( - 1)^j \frac{n}{{n - j}}\binom{n-j}jP_{p - 4n + 8j} }  = P_{p + n} Q_{2n}   - P_{p + 3n}\,,
\end{equation}
\begin{equation}\label{eq.wp24fl9}
\sum_{j = 0}^{\left\lfloor {n/2} \right\rfloor } {( - 1)^j \binom{n-j}jP_{p - 4n + 8j} }  = P_{p+n} P_{2n-2}  - P_{p + n - 1}P_{2n-1}\,,
\end{equation}
\begin{equation}\label{eq.cl0i5ui}
\sum_{j = 0}^{\left\lfloor {n/2} \right\rfloor } {( - 1)^j \frac{n}{{n - j}}\binom{n-j}jP_{p - 3n + 8j} }  = P_{2n+p} Q_{2n}   - P_{p+4n}
\end{equation}
and
\begin{equation}\label{eq.rgi91fp}
\sum_{j = 0}^{\left\lfloor {n/2} \right\rfloor } {( - 1)^j \binom{n-j}jP_{p - 3n + 8j} }  = P_{2n+p} P_{2n-2}   - P_{2n+p-1}P_{2n-1}\,.
\end{equation}

\end{theorem}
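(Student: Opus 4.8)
The unifying idea is to feed suitable powers of $\alpha$ and $\beta$ into Waring's formula \eqref{eq.h35j76y} and its dual \eqref{eq.amsa61r}, choosing the arguments so that \emph{both} $xy$ and $x+y$ are pure powers of $\gamma$. Then $(xy)^j(x+y)^{n-2j}$ is a single power of $\gamma$, so after multiplying the identity through by $\gamma^{p+4}$ the left-hand side is a linear combination of pure powers of $\gamma$; extracting its $\gamma^2$-component via the $\gamma$-version of property P9 (i.e.\ $(\gamma^{k+4})_{\gamma^2}=P_k$) produces a Padovan sum, while the same operation applied to the right-hand side, with the help of \eqref{eq.gtb313p} and \eqref{eq.rn0hi2p}, produces the closed form.

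For \eqref{eq.wnoinfi} and \eqref{eq.tmss2th} I would take $x=\alpha$, $y=\beta$ in \eqref{eq.h35j76y} and \eqref{eq.amsa61r} respectively. By \eqref{eq.nj4qi47}, $xy=1/\gamma$ and $x+y=-\gamma$, so $(xy)^j(x+y)^{n-2j}=(-1)^n\gamma^{n-3j}$. On the right, $x^n+y^n=\alpha^n+\beta^n=Q_n-\gamma^n$ by \eqref{eq.gtb313p}, and $(\alpha^{n+1}-\beta^{n+1})/(\alpha-\beta)=-\gamma P_{n-3}+P_{n-2}$ by \eqref{eq.rn0hi2p}. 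Multiplying by $\gamma^{p+4}$ and reading off the $\gamma^2$-component (so $\gamma^{n-3j+p+4}\mapsto P_{p+n-3j}$, $\gamma^{p+4}\mapsto P_p$, $\gamma^{n+p+4}\mapsto P_{n+p}$, $\gamma^{p+5}\mapsto P_{p+1}$) and then dividing by $(-1)^n$ gives \eqref{eq.wnoinfi} and \eqref{eq.tmss2th}.

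For the remaining four identities I would take $x=\alpha^{-2}$, $y=\beta^{-2}$. Then $xy=(\alpha\beta)^{-2}=\gamma^2$ and, crucially, $x+y=1/\alpha^2+1/\beta^2=1/\gamma^3$ by \eqref{eq.sclithe}; hence $(xy)^j(x+y)^{n-2j}=\gamma^{8j-3n}$, and after multiplication by $\gamma^{p+4}$ the $\gamma^2$-component of the left side is $\sum(-1)^j\,[\tfrac{n}{n-j}]\binom{n-j}{j}P_{p-3n+8j}$, matching the summands in \eqref{eq.cl0i5ui} and \eqref{eq.rgi91fp}. In \eqref{eq.h35j76y} the right side is $x^n+y^n=\alpha^{-2n}+\beta^{-2n}=Q_{-2n}-\gamma^{-2n}$, whose $\gamma^2$-component after multiplying by $\gamma^{p+4}$ is $Q_{-2n}P_p-P_{p-2n}$; applying \eqref{eq.dw7915i} with $n$ replaced by $2n$ rewrites this as $Q_{2n}P_{p+2n}-P_{p+4n}$, which is \eqref{eq.cl0i5ui}. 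In \eqref{eq.amsa61r} the right side is $(\alpha^{-2n-2}-\beta^{-2n-2})/(\alpha^{-2}-\beta^{-2})$; using $\alpha^{-k}-\beta^{-k}=-(\alpha^k-\beta^k)\gamma^k$ this equals $\gamma^{2n}(\alpha^{2n+2}-\beta^{2n+2})/(\alpha^2-\beta^2)$, and \eqref{eq.vf37uhb} together with $\alpha+\beta=-\gamma$ collapses the quotient to $P_{2n-2}-\gamma^{-1}P_{2n-1}$; multiplying by $\gamma^{2n+p+4}$ and taking the $\gamma^2$-component gives $P_{2n+p}P_{2n-2}-P_{2n+p-1}P_{2n-1}$, which is \eqref{eq.rgi91fp}. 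Finally, \eqref{eq.tnxz92f} and \eqref{eq.wp24fl9} follow from \eqref{eq.cl0i5ui} and \eqref{eq.rgi91fp} by the substitution $p\mapsto p-n$.

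The only genuinely nonroutine points are: (i) spotting that $x=\alpha^{-2}$, $y=\beta^{-2}$ is exactly the substitution making both $xy$ and $x+y$ pure powers of $\gamma$ — this is precisely the content of \eqref{eq.sclithe}, and it is the source of the negative subscripts in \eqref{eq.tnxz92f}--\eqref{eq.rgi91fp}; and (ii) converting the negative-index quantities $Q_{-2n}$ and $(\alpha^{-2n-2}-\beta^{-2n-2})/(\alpha^{-2}-\beta^{-2})$ into positive-index Padovan/Perrin numbers, for which \eqref{eq.dw7915i}, \eqref{eq.vf37uhb} and \eqref{eq.nj4qi47} do all the work. Beyond that, each identity is a mechanical expansion followed by extraction of the $\gamma^2$-component.
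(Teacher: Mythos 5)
Your proposal is correct and follows the paper's own route: substitute $(x,y)=(\alpha,\beta)$ and $(x,y)=(1/\alpha^2,1/\beta^2)$ into Waring's formula \eqref{eq.h35j76y} and its dual \eqref{eq.amsa61r}, multiply through by $\gamma^{p+4}$, and read off the $\gamma^2$-component using P9 together with \eqref{eq.gtb313p} and \eqref{eq.rn0hi2p}. The only (harmless) deviation is that the paper derives \eqref{eq.tnxz92f} and \eqref{eq.wp24fl9} from a third substitution $(x,y)=(\alpha/\beta,\beta/\alpha)$ via \eqref{eq.uajx4zx}, whereas you obtain them from \eqref{eq.cl0i5ui} and \eqref{eq.rgi91fp} by the reindexing $p\mapsto p-n$, which is legitimate because $p$ is an arbitrary integer.
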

\begin{proof}
Identities \eqref{eq.wnoinfi}, \eqref{eq.tnxz92f} and \eqref{eq.cl0i5ui} are obtained by choosing $(x,y)=(\alpha,\beta)$, $(x,y)=(\alpha/\beta,\beta/\alpha)$ and $(x,y)=(1/\alpha^2,1/\beta^2)$ in identity \eqref{eq.h35j76y}, in turn, and making use of \eqref{eq.nj4qi47}, \eqref{eq.gtb313p}, \eqref{eq.uajx4zx} and \eqref{eq.sclithe}. Identities \eqref{eq.tmss2th}, \eqref{eq.wp24fl9} and \eqref{eq.rgi91fp} are obtained by choosing $(x,y)=(\alpha,\beta)$, $(x,y)=(\alpha/\beta,\beta/\alpha)$ and $(x,y)=(1/\alpha^2,1/\beta^2)$ in identity \eqref{eq.amsa61r}, in turn, and making use of \eqref{eq.nj4qi47}, \eqref{eq.gtb313p}, \eqref{eq.uajx4zx}, \eqref{eq.sclithe} and \eqref{eq.rn0hi2p}.
\end{proof}
\begin{theorem}\label{thm.thdk11q}
The following identities hold for positive integer $n$ and any integer $p$; where values of $a$, $b$, $c$, $d$, $e$ and $f$ are given in the attached table:
\begin{equation}\label{eq.ywavvnk}
\sum_{j = 0}^{\left\lfloor {n/2} \right\rfloor } {( - 1)^j \frac{n}{{n - j}}\binom {n-j}ja^j b^j f^{n - 2j} P_{(m + e)n + p + (c - 2e + d)j} }  = a^n P_{(m + c)n + p}  + b^n P_{(m + d)n + p}
\end{equation}
and
\begin{equation}\label{eq.kvb4it2}
\sum_{j = 0}^{\left\lfloor {n/2} \right\rfloor } {( - 1)^j \frac{n}{{n - j}}\binom {n-j}ja^j b^j f^{n - 2j} Q_{(m + e)n + p + (c - 2e + d)j} }  = a^n Q_{(m + c)n + p}  + b^n Q_{(m + d)n + p}\,.
\end{equation}
\begin{tabular}{lllllllllllllllllllll}
\multicolumn{1}{c}{} & \multicolumn{1}{c}{} & \multicolumn{1}{c}{$a$} & \multicolumn{1}{c}{$b$} & \multicolumn{1}{c}{$c$} & \multicolumn{1}{c}{$d$} & \multicolumn{1}{c}{$e$} & \multicolumn{1}{c}{$f$} & \multicolumn{1}{c}{} & \multicolumn{1}{c}{} & \multicolumn{1}{c}{} & \multicolumn{1}{c}{} & \multicolumn{1}{c}{} & \multicolumn{1}{c}{} & \multicolumn{1}{c}{} & \multicolumn{1}{c}{$a$} & \multicolumn{1}{c}{$b$} & \multicolumn{1}{c}{$c$} & \multicolumn{1}{c}{$d$} & \multicolumn{1}{c}{$e$} & \multicolumn{1}{c}{$f$} \\ 
\cline{1-8}\cline{14-21}
\multicolumn{1}{c}{Set 1:} & \multicolumn{1}{c}{} & \multicolumn{1}{c}{$1$} & \multicolumn{1}{c}{$-1$} & \multicolumn{1}{c}{$1$} & \multicolumn{1}{c}{$-1$} & \multicolumn{1}{c}{$-2$} & \multicolumn{1}{c}{$1$} & \multicolumn{1}{c}{} & \multicolumn{1}{c}{} & \multicolumn{1}{c}{} & \multicolumn{1}{c}{} & \multicolumn{1}{c}{} & \multicolumn{1}{c}{Set 9:} & \multicolumn{1}{c}{} & \multicolumn{1}{c}{$1$} & \multicolumn{1}{c}{$1$} & \multicolumn{1}{c}{$1$} & \multicolumn{1}{c}{$-6$} & \multicolumn{1}{c}{$-1$} & \multicolumn{1}{c}{$2$} \\ 
\multicolumn{1}{c}{Set 2:} & \multicolumn{1}{c}{} & \multicolumn{1}{c}{$1$} & \multicolumn{1}{c}{$-1$} & \multicolumn{1}{c}{$1$} & \multicolumn{1}{c}{$-2$} & \multicolumn{1}{c}{$-1$} & \multicolumn{1}{c}{$1$} & \multicolumn{1}{c}{} & \multicolumn{1}{c}{} & \multicolumn{1}{c}{} & \multicolumn{1}{c}{} & \multicolumn{1}{c}{} & \multicolumn{1}{c}{Set 10:} & \multicolumn{1}{c}{} & \multicolumn{1}{c}{$2$} & \multicolumn{1}{c}{$1$} & \multicolumn{1}{c}{$2$} & \multicolumn{1}{c}{$-2$} & \multicolumn{1}{c}{$5$} & \multicolumn{1}{c}{$1$} \\ 
\multicolumn{1}{c}{Set 3:} & \multicolumn{1}{c}{} & \multicolumn{1}{c}{$1$} & \multicolumn{1}{c}{$1$} & \multicolumn{1}{c}{$-1$} & \multicolumn{1}{c}{$-2$} & \multicolumn{1}{c}{$1$} & \multicolumn{1}{c}{$1$} & \multicolumn{1}{c}{} & \multicolumn{1}{c}{} & \multicolumn{1}{c}{} & \multicolumn{1}{c}{} & \multicolumn{1}{c}{} & \multicolumn{1}{c}{Set 11:} & \multicolumn{1}{c}{} & \multicolumn{1}{c}{$1$} & \multicolumn{1}{c}{$-1$} & \multicolumn{1}{c}{$5$} & \multicolumn{1}{c}{$-2$} & \multicolumn{1}{c}{$2$} & \multicolumn{1}{c}{$2$} \\ 
\multicolumn{1}{c}{Set 4:} & \multicolumn{1}{c}{} & \multicolumn{1}{c}{$1$} & \multicolumn{1}{c}{$1$} & \multicolumn{1}{c}{$2$} & \multicolumn{1}{c}{$-2$} & \multicolumn{1}{c}{$3$} & \multicolumn{1}{c}{$1$} & \multicolumn{1}{c}{} & \multicolumn{1}{c}{} & \multicolumn{1}{c}{} & \multicolumn{1}{c}{} & \multicolumn{1}{c}{} & \multicolumn{1}{c}{Set 12:} & \multicolumn{1}{c}{} & \multicolumn{1}{c}{$1$} & \multicolumn{1}{c}{$-2$} & \multicolumn{1}{c}{$5$} & \multicolumn{1}{c}{$2$} & \multicolumn{1}{c}{$-2$} & \multicolumn{1}{c}{$1$} \\ 
\multicolumn{1}{c}{Set 5:} & \multicolumn{1}{c}{} & \multicolumn{1}{c}{$1$} & \multicolumn{1}{c}{$-1$} & \multicolumn{1}{c}{$3$} & \multicolumn{1}{c}{$2$} & \multicolumn{1}{c}{$-2$} & \multicolumn{1}{c}{$1$} & \multicolumn{1}{c}{} & \multicolumn{1}{c}{} & \multicolumn{1}{c}{} & \multicolumn{1}{c}{} & \multicolumn{1}{c}{} & \multicolumn{1}{c}{Set 13:} & \multicolumn{1}{c}{} & \multicolumn{1}{c}{$1$} & \multicolumn{1}{c}{$-1$} & \multicolumn{1}{c}{$7$} & \multicolumn{1}{c}{$-7$} & \multicolumn{1}{c}{$2$} & \multicolumn{1}{c}{$4$} \\ 
\multicolumn{1}{c}{Set 6:} & \multicolumn{1}{c}{} & \multicolumn{1}{c}{$1$} & \multicolumn{1}{c}{$-1$} & \multicolumn{1}{c}{$3$} & \multicolumn{1}{c}{$-2$} & \multicolumn{1}{c}{$2$} & \multicolumn{1}{c}{$1$} & \multicolumn{1}{c}{} & \multicolumn{1}{c}{} & \multicolumn{1}{c}{} & \multicolumn{1}{c}{} & \multicolumn{1}{c}{} & \multicolumn{1}{c}{Set 14:} & \multicolumn{1}{c}{} & \multicolumn{1}{c}{$1$} & \multicolumn{1}{c}{$-4$} & \multicolumn{1}{c}{$7$} & \multicolumn{1}{c}{$2$} & \multicolumn{1}{c}{$-7$} & \multicolumn{1}{c}{$1$} \\ 
\multicolumn{1}{c}{Set 7:} & \multicolumn{1}{c}{} & \multicolumn{1}{c}{$2$} & \multicolumn{1}{c}{$-1$} & \multicolumn{1}{c}{$-1$} & \multicolumn{1}{c}{$1$} & \multicolumn{1}{c}{$-6$} & \multicolumn{1}{c}{$1$} & \multicolumn{1}{c}{} & \multicolumn{1}{c}{} & \multicolumn{1}{c}{} & \multicolumn{1}{c}{} & \multicolumn{1}{c}{} & \multicolumn{1}{c}{Set 15:} & \multicolumn{1}{c}{} & \multicolumn{1}{c}{$1$} & \multicolumn{1}{c}{$4$} & \multicolumn{1}{c}{$-7$} & \multicolumn{1}{c}{$2$} & \multicolumn{1}{c}{$7$} & \multicolumn{1}{c}{$1$} \\ 
\multicolumn{1}{c}{Set 8:} & \multicolumn{1}{c}{} & \multicolumn{1}{c}{$2$} & \multicolumn{1}{c}{$-1$} & \multicolumn{1}{c}{$-1$} & \multicolumn{1}{c}{$-6$} & \multicolumn{1}{c}{$1$} & \multicolumn{1}{c}{$1$} & \multicolumn{1}{c}{} & \multicolumn{1}{c}{} & \multicolumn{1}{c}{} & \multicolumn{1}{c}{} & \multicolumn{1}{c}{} & \multicolumn{1}{c}{} & \multicolumn{1}{c}{} & \multicolumn{1}{c}{} & \multicolumn{1}{c}{} & \multicolumn{1}{c}{} & \multicolumn{1}{c}{} & \multicolumn{1}{c}{} & \multicolumn{1}{c}{} \\ 
\end{tabular}
\end{theorem}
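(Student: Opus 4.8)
The plan is to follow the template of the proof of Theorem~\ref{thm.dgtxbtl}: feed a monomial substitution into Waring's formula~\eqref{eq.h35j76y} and then convert powers of $\alpha$ into Padovan, respectively Perrin, numbers by reading off a suitable component. The crucial point is that, for each set $(a,b,c,d,e,f)$ in the attached table, identity~\eqref{eq.zan2cs6} furnishes $a\alpha^{m+c}+b\alpha^{m+d}=f\alpha^{m+e}$ (and the same with $\beta$ or $\gamma$ in place of $\alpha$). Hence the substitution $x=a\alpha^{m+c}$, $y=b\alpha^{m+d}$ in~\eqref{eq.h35j76y} sends $xy$ to $ab\,\alpha^{2m+c+d}$, sends $x+y$ to $f\alpha^{m+e}$, and sends $x^n+y^n$ to $a^n\alpha^{(m+c)n}+b^n\alpha^{(m+d)n}$.

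Carrying out the substitution, the $j$-th summand on the left of~\eqref{eq.h35j76y} becomes $(-1)^j\frac{n}{n-j}\binom{n-j}{j}a^jb^jf^{n-2j}\alpha^{(2m+c+d)j+(m+e)(n-2j)}$, and since the exponent satisfies $(2m+c+d)j+(m+e)(n-2j)=(m+e)n+(c-2e+d)j$, Waring's formula collapses to
\[
\sum_{j=0}^{\lfloor n/2\rfloor}(-1)^j\frac{n}{n-j}\binom{n-j}{j}a^jb^jf^{n-2j}\,\alpha^{(m+e)n+(c-2e+d)j}=a^n\alpha^{(m+c)n}+b^n\alpha^{(m+d)n}\,,
\]
with the identical relation holding when $\alpha$ is replaced throughout by $\beta$ or by $\gamma$.

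To obtain~\eqref{eq.ywavvnk} I would multiply the last display by $\alpha^{p+4}$ and take $\alpha^2$-components of both sides: by linearity of the component map and property P9 in the form $(\alpha^{N+4})_{\alpha^2}=P_N$, the left side turns into $\sum_j(-1)^j\frac{n}{n-j}\binom{n-j}{j}a^jb^jf^{n-2j}P_{(m+e)n+p+(c-2e+d)j}$ and the right side into $a^nP_{(m+c)n+p}+b^nP_{(m+d)n+p}$, the equality of these $\alpha^2$-components being justified by P2. For~\eqref{eq.kvb4it2} I would instead multiply the display by $2\alpha^{p+2}+\alpha^{p-1}$ and again extract $\alpha^2$-components, now using property P1d of Lemma~\ref{lem.pqsiybc} in the form $(2\alpha^{N+2}+\alpha^{N-1})_{\alpha^2}=Q_N$; equivalently, one may multiply the $\alpha$-, $\beta$- and $\gamma$-versions of the display by $\alpha^p$, $\beta^p$, $\gamma^p$ respectively and add, invoking~\eqref{eq.m0eml7d}. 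The requirement that $n$ be a positive integer is inherited from~\eqref{eq.h35j76y}.

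There is no real obstacle here; the one step that deserves genuine attention is the exponent bookkeeping in the middle paragraph — verifying that the power of $\alpha$ riding on the $j$-th summand truly simplifies to the affine expression $(m+e)n+(c-2e+d)j$ — together with the preliminary observation that the table attached to the theorem is precisely the one for which identity~\eqref{eq.zan2cs6} was established, so that the replacement $x+y=f\alpha^{m+e}$ is licensed.
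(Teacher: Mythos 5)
Your proposal is correct and follows essentially the same route as the paper, whose entire proof is the instruction to substitute $(x,y)=(a\alpha^{m+c},b\alpha^{m+d})$ into Waring's formula \eqref{eq.h35j76y} while taking note of \eqref{eq.zan2cs6}; you have merely supplied the exponent bookkeeping and the component-extraction step (via P9 for the Padovan case and P1d or \eqref{eq.m0eml7d} for the Perrin case) that the paper leaves implicit.
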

\begin{proof}
Use $(x,y)=(a\alpha^{m+c},b\alpha^{m+d})$ in \eqref{eq.h35j76y} while taking note of \eqref{eq.zan2cs6}.
\end{proof}
Below we give explicit examples from identities \eqref{eq.ywavvnk} and \eqref{eq.kvb4it2}, using the values of $a$, $b$, $c$, $d$, $e$ and $f$ as given in the indicated set, in each case, as seen from the attached table of Theorem \ref{thm.thdk11q}.
\begin{equation}
\text{Set 1: }\sum_{j = 0}^{\left\lfloor {n/2} \right\rfloor } {\frac{n}{{n - j}}\binom {n-j}jP_{(m - 2)n + p + 4j} }  = P_{(m + 1)n + p}  + (-1)^n P_{(m - 1)n + p}\,,
\end{equation}
\begin{equation}
\text{Set 1: }\sum_{j = 0}^{\left\lfloor {n/2} \right\rfloor } {\frac{n}{{n - j}}\binom {n-j}jQ_{(m - 2)n + p + 4j} }  = Q_{(m + 1)n + p}  + (-1)^n Q_{(m - 1)n + p}\,,
\end{equation}
\begin{equation}
\text{Set 4: }\sum_{j = 0}^{\left\lfloor {n/2} \right\rfloor } {( - 1)^j \frac{n}{{n - j}}\binom {n-j}jP_{(m + 3)n + p - 6j} }  = P_{(m + 2)n + p}  + P_{(m - 2)n + p}\,, 
\end{equation}
\begin{equation}
\text{Set 4: }\sum_{j = 0}^{\left\lfloor {n/2} \right\rfloor } {( - 1)^j \frac{n}{{n - j}}\binom {n-j}jQ_{(m + 3)n + p - 6j} }  = Q_{(m + 2)n + p}  + Q_{(m - 2)n + p}\,, 
\end{equation}
\begin{equation}
\text{Set 7: }\sum_{j = 0}^{\left\lfloor {n/2} \right\rfloor } {\frac{n}{{n - j}}\binom {n-j}j2^j P_{(m - 6)n + p + 12j} }  = 2^n P_{(m - 1)n + p}  + ( - 1)^n P_{(m + 1)n + p}\,, 
\end{equation}
\begin{equation}
\text{Set 7: }\sum_{j = 0}^{\left\lfloor {n/2} \right\rfloor } {\frac{n}{{n - j}}\binom {n-j}j2^j Q_{(m - 6)n + p + 12j} }  = 2^n Q_{(m - 1)n + p}  + ( - 1)^n Q_{(m + 1)n + p}\,,
\end{equation}
\begin{equation}
\text{Set 10: }\sum_{j = 0}^{\left\lfloor {n/2} \right\rfloor } {( - 1)^j \frac{n}{{n - j}}\binom {n-j}j2^j P_{(m + 5)n + p - 10j} }  = 2^n P_{(m + 2)n + p}  + P_{(m - 2)n + p}\,,
\end{equation}
\begin{equation}
\text{Set 10: }\sum_{j = 0}^{\left\lfloor {n/2} \right\rfloor } {( - 1)^j \frac{n}{{n - j}}\binom {n-j}j2^j Q_{(m + 5)n + p - 10j} }  = 2^n Q_{(m + 2)n + p}  + Q_{(m - 2)n + p}\,, 
\end{equation}
\begin{equation}
\text{Set 13: }\sum_{j = 0}^{\left\lfloor {n/2} \right\rfloor } {\frac{n}{{n - j}}\binom {n-j}j2^{2n - 4j} P_{(m + 2)n + p - 4j} }  = P_{(m + 7)n + p}  + ( - 1)^n P_{(m - 7)n + p} 
\end{equation}
and
\begin{equation}
\text{Set 13: }\sum_{j = 0}^{\left\lfloor {n/2} \right\rfloor } {\frac{n}{{n - j}}\binom {n-j}j2^{2n - 4j} Q_{(m + 2)n + p - 4j} }  = Q_{(m + 7)n + p}  + ( - 1)^n Q_{(m - 7)n + p}\,. 
\end{equation}

\subsection{Double binomial summation identities}
\begin{theorem}
The following identities hold for positive integer $n$ and arbitrary integers $m$, $p$ and $q$:
\begin{equation}
\sum_{j = 0}^n {\sum_{k = 0}^j {\binom nj\binom jkP_{p - 4}^k P_{p - 3}^{j - k} P_{p - 5}^{n - j} P_{mn + q + k + j} } }  = P_{(m + p)n + q}\,,
\end{equation}
\begin{equation}
\sum_{j = 0}^n {\sum_{k = 0}^j {\binom nj\binom jkP_{m - 4}^k P_{m - 3}^{j - k} P_{m - 5}^{n - j} P_{pn + q + k + j} } }  = P_{(m + p)n + q}\,,
\end{equation}
\begin{equation}
\sum_{j = 0}^n {\sum_{k = 0}^j {\binom nj\binom jkP_{p - 4}^k P_{p - 3}^{j - k} P_{p - 5}^{n - j} Q_{mn + q + k + j} } }  = Q_{(m + p)n + q}
\end{equation}
and
\begin{equation}
\sum_{j = 0}^n {\sum_{k = 0}^j {\binom nj\binom jkP_{m - 4}^k P_{m - 3}^{j - k} P_{m - 5}^{n - j} Q_{pn + q + k + j} } }  = Q_{(m + p)n + q}\,.
\end{equation}
\end{theorem}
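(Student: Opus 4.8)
The plan is to expand an $n$-th power of $\alpha^p$ using identity P1 of Lemma~\ref{lem.mjwk6zf}, recognize the resulting double sum as a nested binomial expansion, and then read off the coefficients component-wise via P9 (for the Padovan identities) or by summing the $\alpha$-, $\beta$- and $\gamma$-versions via \eqref{eq.m0eml7d} (for the Perrin identities). Write $A=\alpha^2P_{p-4}$, $B=\alpha P_{p-3}$, $C=P_{p-5}$, so that P1 reads $\alpha^p=A+B+C$ and hence $\alpha^{pn}=(A+B+C)^n$. First I would reorganize the trinomial expansion by grouping the factors that are not $C$:
\[
(A+B+C)^n=\sum_{j=0}^n\binom nj(A+B)^jC^{n-j}=\sum_{j=0}^n\sum_{k=0}^j\binom nj\binom jkA^kB^{j-k}C^{n-j}.
\]
Since $A^kB^{j-k}C^{n-j}=P_{p-4}^kP_{p-3}^{j-k}P_{p-5}^{n-j}\,\alpha^{2k+(j-k)}=P_{p-4}^kP_{p-3}^{j-k}P_{p-5}^{n-j}\,\alpha^{k+j}$, this gives
\[
\alpha^{pn}=\sum_{j=0}^n\sum_{k=0}^j\binom nj\binom jkP_{p-4}^kP_{p-3}^{j-k}P_{p-5}^{n-j}\,\alpha^{k+j},
\]
an identity which, by P1a and P1c, holds verbatim with $\alpha$ replaced by $\beta$ or $\gamma$.

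For the first (Padovan) identity I would multiply this relation through by $\alpha^{mn+q+4}$, obtaining
\[
\alpha^{(m+p)n+q+4}=\sum_{j=0}^n\sum_{k=0}^j\binom nj\binom jkP_{p-4}^kP_{p-3}^{j-k}P_{p-5}^{n-j}\,\alpha^{mn+q+k+j+4},
\]
and then take the $\alpha^2$-component of both sides. By P9, $\left(\alpha^N\right)_{\alpha^2}=P_{N-4}$, so the left-hand side contributes $P_{(m+p)n+q}$ while the general term on the right contributes $P_{mn+q+k+j}$, which is precisely the claimed identity. The second identity is the same argument with $m$ and $p$ interchanged: expand $\alpha^{mn}=(\alpha^2P_{m-4}+\alpha P_{m-3}+P_{m-5})^n$ instead, multiply by $\alpha^{pn+q+4}$, and extract the $\alpha^2$-component; since $P_{(m+p)n+q}$ is symmetric in $m$ and $p$, both routes are legitimate and yield the two stated forms.

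For the two Perrin identities I would instead multiply the displayed expansion (and its $\beta$- and $\gamma$-analogues) by $\alpha^{mn+q}$, $\beta^{mn+q}$, $\gamma^{mn+q}$ respectively and add the three resulting equations. By \eqref{eq.m0eml7d} the left-hand sides sum to $Q_{(m+p)n+q}$ and, term by term, the factors $\alpha^{mn+q+k+j}+\beta^{mn+q+k+j}+\gamma^{mn+q+k+j}$ on the right sum to $Q_{mn+q+k+j}$, giving the third identity; the fourth again follows by the $m\leftrightarrow p$ interchange. I expect no serious obstacle here: the entire content is the observation that $\sum_{j}\sum_{k}\binom nj\binom jk$ is the nested form of the trinomial expansion of $\alpha^{pn}=(\alpha^2P_{p-4}+\alpha P_{p-3}+P_{p-5})^n$. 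The one place demanding care is the exponent bookkeeping — verifying that $A^kB^{j-k}$ produces the power $\alpha^{k+j}$ rather than $\alpha^{2k+j}$, and that inserting the shift $+4$ before taking the $\alpha^2$-component lands the subscripts exactly on $mn+q+k+j$ and $(m+p)n+q$.
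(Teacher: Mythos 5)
Your proof is correct: expanding $\alpha^{pn}=(\alpha^2P_{p-4}+\alpha P_{p-3}+P_{p-5})^n$ as the nested double binomial sum, multiplying by $\alpha^{mn+q+4}$ and extracting the $\alpha^2$-component via P9 (and, for the Perrin cases, summing the $\alpha$-, $\beta$- and $\gamma$-versions using $Q_N=\alpha^N+\beta^N+\gamma^N$) is exactly the mechanism the paper's framework intends, and the exponent bookkeeping ($A^kB^{j-k}\mapsto\alpha^{k+j}$, subscripts landing on $mn+q+k+j$ and $(m+p)n+q$) checks out. The paper in fact states this theorem without any proof, so your argument supplies the omitted details in the same style as the proofs of the neighbouring theorems.
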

\begin{theorem}
The following identities hold for positive integer $n$ and arbitrary integers $p$ and $q$:
\begin{equation}\label{eq.zu1umgi}
\sum_{j = 0}^{\left\lfloor {n/2} \right\rfloor } {\sum_{k = 0}^{n - 2j} {( - 1)^{j + k} \frac{n}{{n - j}}\binom {n-j}j\binom {n-2j}kQ_p^{n - 2j - k} P_{q - pj + pk} } }  = Q_{pn} P_q  - P_{pn + q}\,,
\end{equation}
\begin{equation}\label{eq.ekkeq9o}
\begin{split}
&\sum_{j = 0}^{\left\lfloor {n/2} \right\rfloor } {\sum_{k = 0}^{n - 2j} {( - 1)^{j + k} \binom {n-j}j\binom {n-2j}kQ_p^{n - 2j - k} P_{q - pj + pk} } }\\
&\qquad\qquad  = \frac{{\left| {\begin{array}{*{20}c}
   {P_{q + 1} P_{pn + p - 4}  - P_q P_{pn + p - 3} } & {P_{p - 4} } & 0  \\
   {P_{q + 2} P_{pn + p - 4}  - P_{q + 1} P_{pn + p - 3} } & { - P_{p - 3} } & {P_{p - 4} }  \\
   {P_q P_{pn + p - 4}  - P_{q - 1} P_{pn + p - 3} } & 0 & { - P_{p - 3} }  \\
\end{array}} \right|}}{{\left| {\begin{array}{*{20}c}
   { - P_{p - 3} } & {P_{p - 4} } & 0  \\
   {P_{p - 4} } & { - P_{p - 3} } & {P_{p - 4} }  \\
   {P_{p - 4} } & 0 & { - P_{p - 3} }  \\
\end{array}} \right|}}\,.
\end{split}
\end{equation}
\begin{equation}\label{eq.tw6t73n}
\sum_{j = 0}^{\left\lfloor {n/2} \right\rfloor } {\sum_{k = 0}^{n - 2j} {( - 1)^{j + k} \frac{n}{{n - j}}\binom {n-j}j\binom {n-2j}kQ_{2p}^k P_{q + 3pn -6pj - 2pk} } }  = (-1)^n(Q_{2pn} P_{pn + q}  - P_{3pn + q})\,,
\end{equation}
\begin{equation}\label{eq.za364v4}
\begin{split}
&\sum_{j = 0}^{\left\lfloor {n/2} \right\rfloor } {\sum_{k = 0}^{n - 2j} {( - 1)^{j + k} \binom {n-j}j\binom {n-2j}kQ_{2p}^k P_{q + 3pn -6pj - 2pk} } }\\
&\qquad  =\frac{{(-1)^n\left| {\begin{array}{*{20}c}
   {P_{pn + q} P_{2pn + 2p - 3}  - P_{pn + q + 1} P_{2pn + 2p - 4} } & {P_{2p - 4} } & 0  \\
   {P_{pn + q + 1} P_{2pn + 2p - 3}  - P_{pn + q + 2} P_{2pn + 2p - 4} } & {P_{2p - 3} } & {P_{2p - 4} }  \\
   {P_{pn + q - 1} P_{2pn + 2p - 3}  - P_{pn + q} P_{2pn + 2p - 4} } & 0 & {P_{2p - 3} }  \\
\end{array}} \right|}}{{\left| {\begin{array}{*{20}c}
   {P_{2p - 3} } & {P_{2p - 4} } & 0  \\
   {P_{2p - 4} } & {P_{2p - 3} } & {P_{2p - 4} }  \\
   {P_{2p - 4} } & 0 & {P_{2p - 3} }  \\
\end{array}} \right|}} \,,
\end{split}
\end{equation}
\begin{equation}\label{eq.yhpq44m}
\sum_{j = 0}^{\left\lfloor {n/2} \right\rfloor } {\sum_{k = 0}^{n - 2j} {( - 1)^{j + k} \frac{n}{{n - j}}\binom {n-j}j\binom {n-2j}kQ_{2p}^k P_{q + 4pn -8pj - 2pk} } }  = (-1)^n(Q_{2pn} P_{2pn + q}  - P_{4pn + q})
\end{equation}
and
\begin{equation}\label{eq.l3j9r2i}
\begin{split}
&\sum_{j = 0}^{\left\lfloor {n/2} \right\rfloor } {\sum_{k = 0}^{n - 2j} {( - 1)^{j + k} \binom {n-j}j\binom {n-2j}kQ_{2p}^k P_{q + 4pn -8pj - 2pk} } } \\ 
&\qquad=\frac{{( - 1)^n \left| {\begin{array}{*{20}c}
   {P_{2pn + q + 1} P_{2pn + 2p - 4}  - P_{2pn + q} P_{2pn + 2p - 3} } & {P_{2p - 4} } & 0  \\
   {P_{2pn + q + 2} P_{2pn + 2p - 4}  - P_{2pn + q + 1} P_{2pn + 2p - 3} } & {P_{2p - 3} } & {P_{2p - 4} }  \\
   {P_{2pn + q} P_{2pn + 2p - 4}  - P_{pn + q - 1} P_{2pn + 2p - 3} } & 0 & {P_{2p - 3} }  \\
\end{array}} \right|}}{{\left| {\begin{array}{*{20}c}
   { - P_{2p - 3} } & {P_{2p - 4} } & 0  \\
   {P_{2p - 4} } & { - P_{2p - 3} } & {P_{2p - 4} }  \\
   {P_{2p - 4} } & 0 & { - P_{2p - 3} }  \\
\end{array}} \right|}}\,.
\end{split}
\end{equation}
\end{theorem}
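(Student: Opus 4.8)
The plan is to derive all six identities by the device used repeatedly in the paper: feed a pair $(x,y)$ built from the powers $\alpha^{p},\beta^{p}$ (or $\alpha^{2p},\beta^{2p}$) into Waring's formula \eqref{eq.h35j76y} or its dual \eqref{eq.amsa61r}, rewrite $xy$ and $x+y$ through the algebraic relations among $\alpha,\beta,\gamma$, expand $(x+y)^{n-2j}$ by the binomial formula to create the inner $k$-sum (whose range $0\le k\le n-2j$ is exactly that of the expansion), multiply the resulting identity by a suitable power of $\gamma$, and read off the $\gamma^{2}$-component using P1c and Lemma \ref{lem.ytluwst}.

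For \eqref{eq.zu1umgi} I would take $(x,y)=(\alpha^{p},\beta^{p})$ in \eqref{eq.h35j76y}. Then $xy=(\alpha\beta)^{p}=\gamma^{-p}$ by \eqref{eq.nj4qi47} and $x+y=\alpha^{p}+\beta^{p}=Q_{p}-\gamma^{p}$ by \eqref{eq.gtb313p}; writing $(Q_{p}-\gamma^{p})^{n-2j}=\sum_{k}\binom{n-2j}{k}Q_{p}^{\,n-2j-k}(-\gamma^{p})^{k}$ and combining with $(xy)^{j}=\gamma^{-pj}$ makes the general term of the left side of \eqref{eq.h35j76y} equal to $(-1)^{j+k}\frac{n}{n-j}\binom{n-j}{j}\binom{n-2j}{k}Q_{p}^{\,n-2j-k}\gamma^{pk-pj}$. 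Multiplying by $\gamma^{q+4}$ and taking the $\gamma^{2}$-component turns the left side into the stated double sum (since $\left(\gamma^{q+4+pk-pj}\right)_{\gamma^{2}}=P_{q-pj+pk}$), while the right side $x^{n}+y^{n}=\alpha^{pn}+\beta^{pn}$, multiplied by $\gamma^{q+4}$, has $\gamma^{2}$-component $Q_{pn}P_{q}-P_{pn+q}$ by \eqref{eq.jkdhfys}. Identity \eqref{eq.ekkeq9o} is the same substitution in the dual formula \eqref{eq.amsa61r}: there the right side $\tfrac{x^{n+1}-y^{n+1}}{x-y}=\tfrac{\alpha^{pn+p}-\beta^{pn+p}}{\alpha^{p}-\beta^{p}}$, multiplied by $\gamma^{q+4}$, has $\gamma^{2}$-component the determinant quotient of \eqref{eq.df8mpe3} with $r=pn+p$, $s=p$, $t=q+4$.

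For \eqref{eq.tw6t73n}--\eqref{eq.l3j9r2i} I would instead use $(x,y)=(\alpha^{2p},\beta^{2p})$, so that $xy=\gamma^{-2p}$ and $x+y=Q_{2p}-\gamma^{2p}$, and expand $(Q_{2p}-\gamma^{2p})^{n-2j}=\sum_{k}\binom{n-2j}{k}Q_{2p}^{\,k}(-\gamma^{2p})^{n-2j-k}$, so that now $Q_{2p}$ carries the exponent $k$. The general term of the left side of \eqref{eq.h35j76y} then contains $Q_{2p}^{\,k}\gamma^{2pn-6pj-2pk}$ with sign $(-1)^{j}(-1)^{n-2j-k}=(-1)^{n+j+k}$; multiplying by $\gamma^{pn+q+4}$ (for \eqref{eq.tw6t73n}) or by $\gamma^{2pn+q+4}$ (for \eqref{eq.yhpq44m}) and extracting the $\gamma^{2}$-component produces the stated $k$-sum over Padovan numbers once the factor $(-1)^{n}$ is carried across, while $x^{n}+y^{n}=\alpha^{2pn}+\beta^{2pn}$ times that power of $\gamma$ gives, via \eqref{eq.jkdhfys}, $(-1)^{n}(Q_{2pn}P_{pn+q}-P_{3pn+q})$, resp.\ $(-1)^{n}(Q_{2pn}P_{2pn+q}-P_{4pn+q})$. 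The dual companions \eqref{eq.za364v4} and \eqref{eq.l3j9r2i} arise from the same substitutions in \eqref{eq.amsa61r}, the right side being $\tfrac{\alpha^{2pn+2p}-\beta^{2pn+2p}}{\alpha^{2p}-\beta^{2p}}$ times the corresponding power of $\gamma$, evaluated through \eqref{eq.df8mpe3}.

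The computations are mechanical; the delicate part is the bookkeeping. One must choose the external power of $\gamma$ so that the $\gamma^{2}$-component of $x^{n}+y^{n}$ times that power is \emph{exactly} the claimed right-hand side under \eqref{eq.jkdhfys} or \eqref{eq.df8mpe3}, and one must keep the three interacting signs straight: Waring's $(-1)^{j}$, the $(-1)^{n-2j-k}$ coming from $(Q_{2p}-\gamma^{2p})^{n-2j}$, and the overall $(-1)^{n}$ that must be pulled out to leave the clean $(-1)^{j+k}$ in the summand. In particular, since $(\alpha^{2p}\beta^{2p})^{j}=\gamma^{-2pj}$ and $(-\gamma^{2p})^{n-2j-k}$ together contribute $-6pj$ (not $-8pj$) to the exponent of $\gamma$, the exponent arithmetic forces the Padovan subscript on the left of \eqref{eq.yhpq44m} and \eqref{eq.l3j9r2i} to be $q+4pn-6pj-2pk$; the small case $p=1$, $n=2$, $q=1$ confirms this subscript and the overall normalization, so the bulk of the work is making sure every such exponent and sign is tracked consistently across all six cases.
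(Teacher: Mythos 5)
Your proposal is correct and uses essentially the same machinery as the paper: Waring's formula \eqref{eq.h35j76y} and its dual \eqref{eq.amsa61r} with $(x,y)$ built from powers of the roots, binomial expansion of $(x+y)^{n-2j}$ to generate the inner $k$-sum, and extraction of the $\gamma^2$-component via Lemma \ref{lem.ytluwst}. Your choice $(x,y)=(\alpha^{2p},\beta^{2p})$ for the last four identities differs superficially from the paper's $((\alpha/\beta)^p,(\beta/\alpha)^p)$ and $(1/\alpha^{2p},1/\beta^{2p})$, but only by an overall power of $\gamma$, which you absorb into the multipliers $\gamma^{pn+q+4}$ and $\gamma^{2pn+q+4}$; by homogeneity of \eqref{eq.h35j76y} and \eqref{eq.amsa61r} the resulting identities coincide. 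Your flag on \eqref{eq.yhpq44m} and \eqref{eq.l3j9r2i} is a genuine catch: the subscript must be $q+4pn-6pj-2pk$, not $q+4pn-8pj-2pk$. Indeed the paper's own substitution $(1/\alpha^{2p},1/\beta^{2p})$ gives $(xy)^j=\gamma^{2pj}$ and $(x+y)^{n-2j}=(Q_{2p}\gamma^{2p}-\gamma^{4p})^{n-2j}$, whose combined exponent is $4pn-6pj-2pk$, and the printed version fails numerically: for $p=1$, $n=2$, $q=1$ the left side of \eqref{eq.yhpq44m} is $P_9-4P_7+4P_5-2P_1=-1$ while the right side is $Q_4P_5-P_9=-3$ (replacing $P_1$ by $P_3$ restores equality). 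So the defect is in the statement, not in your argument; with the corrected subscript, \eqref{eq.yhpq44m} is simply \eqref{eq.tw6t73n} with $q$ replaced by $q+pn$, which is consistent with your observation that the two substitutions are equivalent up to a shift.
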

\begin{proof}
Identities \eqref{eq.zu1umgi}, \eqref{eq.tw6t73n} and \eqref{eq.yhpq44m} are obtained by choosing $(x,y)=(\alpha^p,\beta^p)$, $(x,y)=((\alpha/\beta)^p,(\beta/\alpha)^p)$ and $(x,y)=(1/\alpha^{2p},1/\beta^{2p})$ in identity \eqref{eq.h35j76y}, in turn, and making use of Lemma \ref{lem.ytluwst}. Identities \eqref{eq.ekkeq9o}, \eqref{eq.za364v4} and \eqref{eq.l3j9r2i} are obtained by choosing $(x,y)=(\alpha^p,\beta^p)$, $(x,y)=((\alpha/\beta)^p,(\beta/\alpha)^p)$ and $(x,y)=(1/\alpha^{2p},1/\beta^{2p})$ in identity \eqref{eq.amsa61r}, in turn, and making use of Lemma \ref{lem.ytluwst}.
\end{proof}

\hrule

\noindent 2010 {\it Mathematics Subject Classification}:
Primary 11B37; Secondary 65B10, 11B65.

\noindent \emph{Keywords: }
Padovan sequence, Perrin sequence, generating function, binomial summation.

\hrule

\noindent Concerned with sequences: 
A000931, A001608


\end{document}